\documentclass[11 pt]{article}  

\usepackage[utf8]{inputenc}
\usepackage{amsmath,indentfirst,qsymbols,graphicx,psfrag,amsfonts,stmaryrd,hyperref,color,enumerate,amsthm}

\footskip 1.5 true cm
\headheight 0.59 true cm
\headsep 1.5 true cm
\marginparpush 0.4 true cm
\marginparsep 0.4 true cm 
\marginparwidth 1.9 true cm
\oddsidemargin -1 true cm
\evensidemargin -1 true cm
\textwidth 16.5 true cm
\textheight 21 true cm
\footskip 1  true cm
\topmargin -2 true cm

\def \1{\textbf{1}}

\def \bla{\big\langle}
\def \bra{\big\rangle}

\def \ov#1{\overline{#1}}
\def \un#1{\underline{#1}}
\def \ND#1{{\sf ND}_{#1}}

\def \CCS{{\sf CCS}}
\def \CS{{\sf CS}}

\def \app#1#2#3#4#5{\begin{array}{rccl} #1:&#2&\longrightarrow&#3\\ &#4&\longmapsto&#5\end{array}}

\def \bar{\overline}
\def \Tri{{\sf Tri}}
\def \ba{\begin{align}}
\def \ea{\end{align}}
\def \be{\begin{eqnarray*}}
\def \ee{\end{eqnarray*}}
\def \ben{\begin{eqnarray}}
\def \een{\end{eqnarray}}

\def \beq{\begin{equation}}
\def \eq{\end{equation}}

\def \build#1#2#3{\mathrel{\mathop{\kern 0pt#1}\limits_{#2}^{#3}}}

\def \ba{{\bf a}}

\def \bx{\bar{x}}

\def \sx{{\sf x}}

\def \captionn#1{\begin{center}\begin{minipage}{15cm}\sf\caption{\small #1}\end{minipage}\end{center}}

\def \Tri{{\sf Tri}}

\def \eref#1{(\ref{#1})}

\def \Co{{\sf Cb}}

\def \l{\left}

\def \r{\right}
\def \sous#1#2{\mathrel{\mathop{\kern 0pt#1}\limits_{#2}}}
\def \sur#1#2{\mathrel{\mathop{\kern 0pt#1}\limits^{#2}}}
\def \eqd{\sur{=}{(d)}}
\def \w#1{\widetilde{#1}}

\def \Sym{{\sf Sym}}
\def \Sha{{\sf Sha}}
\newqsymbol{`B}{\mathcal{B}}
\newqsymbol{`E}{\mathbb{E}}
\newqsymbol{`I}{\mathbb{I}}
\newqsymbol{`N}{\mathbb{N}}
\newqsymbol{`O}{\Omega}
\newqsymbol{`P}{\mathbb{P}}
\newqsymbol{`Q}{\mathbb{Q}}
\newqsymbol{`R}{\mathbb{R}}
\newqsymbol{`W}{\mathbb{W}}
\newqsymbol{`Z}{\mathbb{Z}}
\newqsymbol{`a}{\alpha}
\newqsymbol{`e}{\varepsilon}
\newqsymbol{`o}{\omega}
\newqsymbol{`t}{\tau}
\newqsymbol{`w}{{\cal W}}
\def\cro#1{\llbracket#1\rrbracket}

\def\CP{{\sf CP}}
\begin{document}

\newtheorem{fig}{\hspace{2cm} Figure}
\newtheorem{lem}{Lemma}
\newtheorem{defi}[lem]{Definition}
\newtheorem{pro}[lem]{Proposition}
\newtheorem{theo}[lem]{Theorem}
\newtheorem{cor}[lem]{Corollary}
\newtheorem{note}[lem]{Note}
\newtheorem{conj}{Conjecture}
\newtheorem{Ques}{Question}
\newtheorem{rem}[lem]{Remark}
\renewcommand{\baselinestretch}{1.1}

 \begin{center}
 \huge\bf
Around Sylvester's question in the plane\\
 {\large \bf Jean-Fran\c{c}ois Marckert}
 \rm \\
 \large{CNRS, LaBRI, Universit\'e Bordeaux \\
  351 cours de la Libération\\
 33405 Talence cedex, France}
 \normalsize
 \end{center}

\begin{abstract}  Pick $n$ points $Z_0,...,Z_{n-1}$ uniformly and independently at random in a compact convex set $H$ with non empty interior of the plane, and let $Q^n_H$ be the probability that the $Z_i$'s are the vertices of a convex polygon. Blaschke 1917 \cite{Bla} proved that $Q^4_T\leq Q^4_H\leq Q^4_D$, where $D$ is a disk and $T$ a triangle. In the present paper we prove $Q^5_T\leq Q^5_H\leq Q^5_D$. One of the main ingredients of our approach is a new formula for $Q^n_H$ which permits to prove that Steiner symmetrization does not decrease $Q^5_H$, and that shaking does not increases it (this is the method Blaschke used in the $n=4$ case). We conjecture that the new formula we provide will lead in the future to the complete proof that  $Q^n_T\leq Q^n_H\leq Q^n_D$ , for any $n$.
\end{abstract}

\section{Introduction}

\paragraph{Notations and convention.} All the random variables (r.v.) in the paper are assumed to be defined on a common probability space $(\Omega,{\cal A},`P)$. The expectation is denoted by $`E$. The plane will be seen as $`R^2$ or as $\mathbb{C}$ and we will pass from the real notation (e.g. $(x,y)$) to the complex one (e.g. $\rho e^{i \theta}$ or $x+iy$) without any warning. For any $n\geq 1$, any generic variable name $z$, $z[n]$ stands for the $n$-tuple $(z_0,\dots,z_{n-1})$ and $z\{n\}$ for the set $\{z_0,\dots,z_{n-1}\}$. The set of compact convex subsets of $`R^2$ with non empty interior is denoted $\CCS$. For any $H\in \CCS$ and any $n\geq 0$, $`P_H^n$ is the notation for the law of $Z[n]$, a sequence of $n$ i.i.d. points taken under the uniform distribution over $H$. Last, we denote by $\cro{a,b}:=[a,b]\cup \mathbb{Z}$.

\subsection{The new result}

A $n$-tuple of points $\sx[n]$ of the plane is said to be in a convex position 
if $x\{n\}$ is the vertex set of a convex polygon. Denote by $\CP_{n}$ the set of $n$-tuples $x[n]$ in  a convex position.
Finally let 
\be
Q_H^n:=`P^n_H(\CP_n)=`P(Z[n]\in \CP_n),
\ee
where $Z[n]$ is  $`P_H^n$ distributed. The aim of this paper is to prove the following theorem:
\begin{theo}\label{theo:main}
 For any $H \in \CCS$,
\ben 11/36=Q^5_T\leq Q^5_H\leq Q^5_D=1-305/(48\pi)^2,
\een
with equality holds for the left inequality only when $H$ is a triangle, and in the right one, only when $H$ is an ellipse. 
 \end{theo}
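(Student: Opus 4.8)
The plan is to carry out, for $n=5$, the three‑step programme that Blaschke used for $n=4$: (a) obtain a workable integral formula for $Q^5_H$; (b) prove that this quantity does not decrease under a Steiner symmetrization of $H$ and does not increase under a ``shaking'' of $H$; (c) conclude from three classical facts — a well‑chosen sequence of Steiner symmetrizations of any $H\in\CCS$ converges, for the Hausdorff metric, to a disk of the same area; a well‑chosen sequence of shakings converges to a triangle; and $H\mapsto Q^5_H$ is invariant under invertible affine maps and continuous for the Hausdorff metric (continuity being a routine consequence of the $L^1$‑convergence of the uniform densities $\mathbf{1}_H/|H|$).

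\noindent\textbf{The formula.} A natural point of departure is the identity obtained by counting, for five i.i.d.\ uniform points $Z[5]$, the number $N$ of indices $i$ with $Z_i\in\conv\big(Z\{5\}\setminus\{Z_i\}\big)$: one has $Z[5]\in\CP_5\iff N=0$, almost surely $N\in\{0,1,2\}$, and a short analysis of the event $\{N\geq1\}$ yields
\[
Q^5_H \;=\; 1-\frac{5\,`E[A_4]}{|H|}+\frac{10\,`E[A_3^2]}{|H|^2},
\]
where $A_k$ denotes the area of the convex hull of $k$ i.i.d.\ uniform points of $H$. In contrast with the case $n=4$, where $1-Q^4_H=4\,`E[A_3]/|H|$ is a single monotone term, here a symmetrization generally moves the two correction terms in \emph{opposite} directions, so no term‑by‑term argument can succeed. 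The purpose of the new formula is to rewrite the whole quantity $1-Q^5_H=`P(Z[5]\notin\CP_5)$ as one explicit integral over tuples of points of a ratio of areas of sub‑polygons of the configuration — organised, say, around a distinguished chord of the point set and the two ``caps'' it cuts off — in a shape where the effect of a one‑dimensional rearrangement of the vertical fibres of $H$ becomes transparent.

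\noindent\textbf{The core estimate.} Take the symmetrization direction to be vertical and write $H=\{(x,y):x\in[a,b],\ f(x)\le y\le g(x)\}$ with $g$ concave and $f$ convex. A Steiner symmetrization replaces each fibre $[f(x),g(x)]$ by the interval of the same length $w(x)=g(x)-f(x)$ centred at $0$, and a shaking replaces it by $[0,w(x)]$; neither operation changes the abscissas $x_0,\dots,x_4$ nor the combinatorial/orientation data of the configuration appearing in the formula. It therefore suffices to fix $x_0,\dots,x_4$ and control an integral over $\prod_j[f(x_j),g(x_j)]$ of an integrand that is piecewise affine in the ordinates. Setting $Y_j=c_j+w_jU_j$ with $U_j$ uniform on $[-\tfrac12,\tfrac12]$ and $c_j$ the centre of the $j$‑th fibre, every affine piece contributes $`E[\,|m+S|\,]$ or $`E[(m+S)^2]$ with $m$ a fixed affine function of $(c_0,\dots,c_4)$ and $S$ a centred random variable unaffected by the operation; the desired inequalities then reduce to the one‑dimensional facts that $m\mapsto`E[\,|m+S|\,]$ and $m\mapsto`E[(m+S)^2]$ are convex and even, hence minimal at $m=0$ — the value produced by the Steiner symmetrization — while the shaking, which pins one endpoint of every fibre to a fixed supporting line, realises the opposite extreme and yields the reverse inequality. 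I expect this to be the main obstacle. For $n=5$ the integrand of the new formula runs through several affine regimes, one for each combinatorial type of five‑point configuration (quadrilateral hull with an interior point, triangular hull with two interior points, and the degenerate cases in between), and on some regimes it is not monotone fibre‑by‑fibre; so one must either couple the ``up/down'' choices of the symmetrization across the five points or decompose the integral further, and producing a decomposition each piece of which is governed by the one‑dimensional lemma is exactly the content — and the difficulty — of the new formula.

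\noindent\textbf{Equality cases and the constants.} Following the equality conditions of the one‑dimensional lemma through the iteration of symmetrizations shows that $Q^5_H=Q^5_D$ forces $H$ to be an ellipse, and $Q^5_H=Q^5_T$ forces $H$ to be a triangle. Finally, by affine invariance one may evaluate the formula on the unit disk and on a standard triangle; computing the relevant moments of areas of random polygons there gives $Q^5_T=11/36$ and $Q^5_D=1-305/(48\pi)^2$.
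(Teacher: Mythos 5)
Your high-level plan is the right one and matches the paper's (Blaschke's programme: monotonicity under Steiner symmetrization and shaking, convergence to the disk and the triangle, affine invariance and continuity, plus Valtr's and the disk formulas for the constants), and your inclusion--exclusion identity $Q^5_H=1-5\,\mathbb{E}[A_4]/|H|+10\,\mathbb{E}[A_3^2]/|H|^2$ is correct. But the proof has a genuine gap exactly where you flag it: the ``new formula'' and the monotonicity argument are described only as desiderata, never produced. The paper's actual route is quite different in substance: it conditions on the sorted abscissas, normalizes the five vertical fibres by a verticality-preserving affine map, conditions further on the two extreme points and on which points lie above/below the chord joining them, and reduces everything to an explicit \emph{rational} recurrence for the probability that points on an orthogonal comb are in convex position (Propositions \ref{pro:dec_comb} and \ref{pro:comb-dec}). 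This turns $\bla V^{\beta[N+2]}[N+2]\bra$ into an explicit polynomial $G$ in the symmetry defect $\beta$, whose optimization over the admissible set is then carried out by exhibiting the relevant differences as positive combinations (cone decompositions, positive-definite quadratic forms). None of this machinery, nor any substitute for it, appears in your proposal.

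Moreover, the specific reduction you suggest --- fibre-by-fibre, writing each affine regime as $\mathbb{E}[\,|m+S|\,]$ or $\mathbb{E}[(m+S)^2]$ and invoking convexity and evenness in $m$ --- would only ever use the constraint $|\beta_j|\le\lambda_j$ on each fibre separately. The paper's computation shows this is insufficient: for $n=5$ the difference $G(\lambda+L+\beta,\lambda-\beta+L,a,b)-G(2\lambda+L,L,a,b)$ is \emph{not} nonnegative under $|\beta_j|\le\lambda_j$ alone; one needs the global convexity constraints $|q_j(\beta)|\le p_j(\lambda)$ on the second-order slope differences, i.e.\ the fact that the upper and lower boundaries of $H$ are concave/convex as functions of $x$. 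So a decomposition in which each piece is governed by your one-dimensional lemma cannot exist; any correct argument must couple the fibres through these convexity constraints. Since you yourself identify producing the decomposition as ``exactly the content --- and the difficulty,'' the proposal stops short of the theorem: steps (a) and (c) are fine, but step (b), which is the whole point, is missing and the sketched mechanism for it is unworkable as stated.
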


Blaschke \cite{Bla} proved in 1917 that for any $H \in \CCS$,
\ben 
2/3=Q^4_T\leq Q^4_H\leq Q^4_D=1-35/(12\pi)^2, 
\een
with the equality cases are, for the left inequality only when $H$ is a triangle, and for the right one, only when $H$ is an ellipse. Roughly, the method of Blaschke relies on two ingredients: \\ 
A. there is an ``algebraic formula'' for $Q^4_H$~:
\ben\label{eq:q4}
Q^4_H= 1-4`E( {\sf Area}(Z_0,Z_1,Z_2))=1- 2`E(|\det(Z_1-Z_0,Z_2-Z_0)|)
\een
where ${\sf Area}(Z[3])$ is the non negative area of the triangle $Z[3]$ under $`P_{H}^3$ (since 4 points are not in a convex position, if one of them lies in the triangle formed by the 3 other ones). \\
B. Steiner symmetrization and shaking (see definitions below) have the following property:\\
\indent a. If $H^\Sym$ (resp. $H^\Sha$) are obtained from $H$ by a Steiner symmetrization (resp. a shaking) with respect to the $x$-axis, then
\ben\label{eq:symm}
Q^4_{H^\Sym}\geq Q^4_{H},~~ Q^4_{H^\Sha}\leq Q^4_{H}, \een
with equality only in some identified special cases;\\
\indent b. For any $H_0\in \CCS$, there exists a sequence of lines $(\Delta_i, i\geq 1)$, so that for $H_{i+1}$ obtained from $H_i$ by Steiner symmetrization (resp. shaking) with respect to $\Delta_{i+1}$, the sequence $(H_n)$ converges to a disk (resp. to a triangle) for the Hausdorff distance (see  Klartag \cite{Klar} and Campi, Colesanti and Gronchi \cite{CCG} for modern and general treatments).
\begin{rem}\label{rem:SSS} Formula \eref{eq:symm} is only needed for Steiner symmetrization and shaking with respect to the x-axis, since rotations conserve uniform distributions and convexity. 
\end{rem}
In the present paper we use the same methodology. Hence, we need an ``algebraic formula'' for $Q^5_H$, and prove that it satisfies the analogous of B.a.:
\begin{theo}\label{theo:ineq} For $n=4$ and $n=5$,
\ben
Q^n_{H^\Sym}\geq Q^n_{H},~~ Q^n_{H^\Sha}\leq Q^n_{H}.
\een
\end{theo}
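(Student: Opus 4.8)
The strategy is to reduce the inequality for convex position probabilities to an inequality between integrals over the symmetrized/shaken body, and then to compare these integrals fiber by fiber. Recall that $Q_H^n = 1 - `P_H^n(\text{not in convex position})$, and "not in convex position" means one of the $n$ points lies in the convex hull of the others. For $n=4$ this is exactly the formula \eref{eq:q4}: $Q^4_H = 1 - 2`E(|\det(Z_1-Z_0,Z_2-Z_0)|)$, so proving $Q^4_{H^\Sym}\ge Q^4_H$ amounts to showing that the expected area of a random triangle decreases under Steiner symmetrization (and increases under shaking), with the areas being computed for uniform points and hence normalized by $|H|^3$ — but $|H|$ is preserved by both operations, so it suffices to compare the unnormalized integral $\int_{H^3}|\det(z_1-z_0,z_2-z_0)|\,dz_0\,dz_1\,dz_2$. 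For $n=5$ we expect, from the "new formula" the paper advertises (and which must be established just before this theorem in the full text), an analogous representation of $Q^5_H$ as an affine function of expectations of (signed or absolute) polynomial functionals of a bounded number of uniform points; the same reduction to unnormalized integrals then applies since $|H|$ is invariant.

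The core of the argument is the fiber comparison. Write a point of $`R^2$ as $(x,y)$. Steiner symmetrization with respect to the $x$-axis replaces, for each $x$, the fiber $H_x=\{y:(x,y)\in H\}$ (an interval since $H$ is convex, of length $\ell(x)$) by the centered interval $[-\ell(x)/2,\ell(x)/2]$; shaking replaces it by $[0,\ell(x)]$ (or $[-\ell(x),0]$, by Remark \ref{rem:SSS} the choice is immaterial after a rotation/reflection). Both operations preserve, for every $x$, the "shadow" $\ell(x)$ and hence the area. For a functional like $|\det(z_1-z_0,z_2-z_0)| = |(x_1-x_0)(y_2-y_0)-(x_2-x_0)(y_1-y_0)|$, once we condition on the $x$-coordinates $x_0,x_1,x_2$, the integral over the $y$-coordinates is of the form $\int\int\int_{H_{x_0}\times H_{x_1}\times H_{x_2}} |a_0 y_0 + a_1 y_1 + a_2 y_2 + b|\,dy_0 dy_1 dy_2$ for constants depending on the $x$'s with $a_0+a_1+a_2=0$. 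The plan is to prove a one-dimensional lemma: for intervals $I_0,I_1,I_2$ of fixed lengths, $\int |a_0 y_0+a_1 y_1+a_2 y_2+b|$ is minimized (over the translates of the $I_j$) when all intervals are symmetric about $0$ and $b=0$, i.e. when they are "concentric", and maximized when they are pushed to a common endpoint. The concentric case is precisely what Steiner symmetrization produces fiberwise (the constant $b$, which encodes the positions of the fiber-midpoints, gets killed), and the common-endpoint case is what shaking produces. This is a convexity/rearrangement statement: $|\cdot|$ is convex, and we are comparing the average of a convex function over a product of intervals under translations; the symmetric/concentric arrangement minimizes by a symmetrization (Riesz-type) argument, and the aligned arrangement maximizes.

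For $n=5$ the new formula should express $Q^5_H$ through expectations of finitely many such absolute-value functionals (the integrand being piecewise linear in the $y$-coordinates after conditioning on the $x$-coordinates, since determinants are multilinear and the convex-hull containment is described by sign conditions on determinants), so the $n=5$ case follows from the same fiberwise one-dimensional inequality applied term by term — provided every term enters $Q^5_H$ with the correct sign. \textbf{The main obstacle} I anticipate is exactly this sign bookkeeping: the new formula for $Q^5_H$ will presumably not be a single clean expectation like \eref{eq:q4} but a signed combination of several expectations (areas of various sub-triangles/quadrilaterals, with inclusion–exclusion signs), and one must check that Steiner symmetrization moves \emph{each} contributing term in the direction that increases $Q^5_H$ — equivalently, that after conditioning on the $x$-coordinates the whole $y$-integrand can be organized as a single convex (piecewise-linear) function of the $y$'s whose integral decreases under fiberwise centering. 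If the signs do not all cooperate termwise, the fallback is to argue at the level of the probability directly: condition on $x[5]$, and show that the conditional probability of convex position, as a function of the $y$-coordinates constrained to lie in prescribed-length intervals, is itself a functional that increases when the intervals are centered (using that convex position is a union of events each defined by inequalities linear in the $y$'s, so its indicator integrates to something amenable to a one-dimensional rearrangement inequality). Handling the equality cases then requires tracking when the one-dimensional lemma is tight, namely when the relevant intervals are already concentric (for Steiner) or already aligned (for shaking) for almost every choice of $x$-coordinates, which by the convergence statement B.b forces $H$ to be an ellipse, respectively a triangle.
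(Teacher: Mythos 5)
Your opening step --- conditioning on the abscissas, using that the width function (hence the law of the $x$-coordinates) is preserved by both operations, and comparing the $y$-integrals fiber by fiber --- is exactly the paper's reduction (Prop.~\ref{pro:comp}), and your observation for the Steiner direction at $n=4$ (the $y$-integral of $|a_0y_0+a_1y_1+a_2y_2|$ is a convex even function of the ``decentering'' parameter $s=\sum a_jm_j$, hence minimized at $s=0$) is sound. But there are two genuine gaps. First, the shaking direction: you assert the integral is ``maximized when the intervals are pushed to a common endpoint,'' but maximized over \emph{what}? Over all translates of the fibers the convex function of $s$ is unbounded above, so there is no maximum; and over the naive constraint that each fiber midpoint stays within its allowed range (i.e.\ $|\beta_j|\le\lambda_j$ in the paper's notation) the claim is \emph{false} for $n=5$ --- the paper states explicitly that positivity of the relevant difference fails under $|\beta_j|\le\lambda_j$ alone and that the convexity of $H$ must be injected through the slope conditions $|q_j(\beta)|\le p_j(\lambda)$ on consecutive fiber midpoints. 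Your proposal never identifies this constraint set, so the maximization half of the argument is not merely unfinished, it is aimed at a statement that is not true as formulated.

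Second, the $n=5$ case is a conditional plan rather than a proof: it rests on an unspecified ``new formula'' expressing $Q^5_H$ as a signed combination of absolute-value functionals, and you yourself flag the sign bookkeeping as the unresolved obstacle, with a fallback that essentially restates the theorem (``show the conditional probability increases when the intervals are centered''). The paper does something structurally different here: after normalizing the fibers by a verticality-preserving affine map, it conditions on the two extreme points and on which indices lie above the chord, which splits the configuration into two combs; it then derives a recursive, absolute-value-free rational formula for the convex-position probability on a comb (Prop.~\ref{pro:comb-dec}), turning the whole comparison into the positivity of an explicit polynomial $D$ in the variables $p_j,q_j$, verified by exhibiting it as a positive combination (cone membership, positive-definite matrices). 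That algebraic machine is precisely what replaces your termwise sign analysis, and nothing in your proposal substitutes for it.
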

We will provide a slightly different proof than that of Blaschke for the case $n=4$.\par
 
Maybe this is the right place to discuss the presence of quotation marks around ``algebraic formula'' here. In fact the determinant is algebraic in the coordinates of the $Z_j'$s but \eref{eq:q4} is more complex than this since it involves an absolute value, and an expectation. In the case $n=4$, this expectation is a triple integral over $H$ that could be as unpleasant as one could imagine.

To circumvent this problem the integrals are not really directly compared. What are compared and some quantities under the integral signs: writing the integrals with respect to $x[3],y[3]$, the coordinates of $z[3]$, and integrating only partially, that is according to certain of these variables only (e.g. $y[3]$), keeping the integrals with respect to the other variables. This quantity ``below a certain number of integral signs'' has also an algebraic form, since when one integrates $\det(z_1-z_0,z_2-z_0)$ according to the variables $y_0,y_1,y_2$ we still get an algebraic result, in fact a polynomial depending on the $x_i$'s and on the maximal and minimal ordinates of the points of $H$ in each of the slices at abscissas $x_0$, $x_1$ and $x_2$. Of course, the fact that this is the absolute value of the determinant which matters, brings some extra complications. 

One of the main advances in the paper is a new ``algebraic'' formula for $Q^n_H$ (including $Q^4_H$) which avoids absolute values, and which is given in terms of $n+2$ real integrals of a polynomial. The comparison of the polynomials appearing
when computing $Q^n_H,Q^n_{H^{\Sym}}$ and $Q^n_{H^{\Sha}}$ will give the expected result, when $n=4$ and $n=5$. We were not able to go further, because of the complexity of the involved polynomials.

\subsection{Related results}

The problem of determining $Q^n_H$ goes back to a question (badly) posed by Sylvester \cite{SYL}. Finally, the question was to show that the map $H\mapsto Q^4_H$ takes its maximum on $\CCS$ when $H$ is a disk and its minimum when $H$ is a triangle, Theorem finally proved by Blaschke \cite{Bla} (see Pfeifer \cite{Pfe} for historical notes). 
Recently some advances have been made on the exact computations of $Q^n_H$: 
Valtr \cite{Valtr-P,Valtr-T} showed that if $S$ is a square (or a non flat parallelogram) and  if $T$ is a non flat triangle then, for $n\geq 1$, 
\ben
Q_S^n =\l(\frac{\binom{2n-2}{n-1}}{n!}\r)^2,~~~ Q_T ^n= \frac{2^n (3n-3)!}{(n-1)!^3(2n)!}.
\een
Buchta \cite{BC2} goes further and gives an expression for $Q_S^{n,m}$ and $Q_T^{n,m}$, the probability that $m$ points exactly among the $n$ random points are on the boundary of the convex hull. 
The author of the present paper gives a formula for $Q^n_D$ (and $Q^{n,m}_D$) in the disk case \cite{MAR}.
 
The literature concerning  the question of the number of points on the convex hull for i.i.d. random points taken in a convex domain is huge. We won't make a survey here but rather refer the reader to Reitzner \cite{Rei} and Hug \cite{Hug} to have an overview of the topic.

As far as we know, Blaschke result has not been extended in the direction we propose here, but rather, in the multidimensional case, where o Blaschke \cite{Bla} proved that  
\ben\label{eq:blas-gene-d}
 Q^{d+2}_{K}\leq Q^{d+2}_{B_d},~~~\textrm{ for any } K\in {\cal K}_d,
\een
where ${\cal K}_d$ is the set of compact convex bodies in $\mathbb{R}^d$ with non empty interior, $B_d$ is the unit ball in $\mathbb{R}^d$.
The inequality $Q^{d+2}_{\Delta_d}\leq Q^{d+2}(K)$ for any $K\in {\cal K}_d$, where $\Delta_d$ is the simplex in $\mathbb{R}^d$ is still a conjecture. Milman \& Pajor \cite[Prop. 5.6]{MP} established that if it holds, then the hyperplane conjecture (or slicing problem) holds true: there exists a universal constant $c>0$ such that for every $d$ and for every convex body $K$ of volume one in $\mathbb{R}^d$ there exists an hyperplane $H$ such that $|K\cap H|\ge c$. This connection is another justification for our work since a right understanding of the 2-D case can be a step in the right direction.

\subsection{Content of the paper}

Most of the paper is devoted to proving Theorem \ref{theo:ineq}.
In section \ref{sec:PTineq}, we recall what are Steiner symmetrization and shaking with respect to the $x$-axis. Take $Z[n]$ under $`P_K^n$ for some $K\in\CCS$, and let $Z_j=(X_j,Y_j)$.
A property of the Steiner symmetrization and shaking with respect to the $x$-axis is that the distribution of the abscissas $X[n]$ is the same when $K$ is $H$, $H^{\Sym}$ or $H^{\Sha}$. 

We then prove the stronger Prop. \ref{pro:comp} which asserts that  Theorem \ref{theo:ineq} holds when we condition on $X[n]=x[n]$ for any sequence $x[n]$. Conditional on $X[n]=x[n]$, $Z_j$ is uniform in the vertical segment $V_K(x_j)=\{(x,y)\in K, x=x_j\}$ for $K$ depending on the case of interest. We then apply a ``normalisation procedure'' in Section \ref{sec:NVP}, which amounts to sending the three collections of segments $(V_K(x_j),0\leq j \leq n-1)$ for 
$K\in\{H,H^{\Sym},H^{\Sha}\}$ by three inversible affine maps which preserves verticality (see Defi. \ref{def:PV}) onto three families of segments $\bar{V},\bar{V}^{\Sha},\bar{V}^{\Sym}$ so that the first (and also the last) segments of these families coincides. 
We then provide an algebraic formula for the probability $\langle S_0,\cdots,S_{n-1}\rangle$ that $n$ independent random points $Z[n]$  are in a convex position, where $Z_j$ is taken uniformly in the vertical segment $S_j$. 
In Lemma \ref{lem:cond_boundary} we condition on the two $Z_j$ with maximum and minimum abscissas, say, $z_0$ and $z_{n-1}$. Appears then that the remaining $Z_j$ are above or under the line $(z_{0},z_{n-1})$, and are uniform on the part of the segments $S_j$ in which they lie. 
The  structure which appears is that of two combs (Defi. \ref{def:Pcomb}), one above and one below the line $(z_0,z_{n-1})$, with some random points on each tooth. In Prop. \ref{pro:dec_comb}, the general formula for $\langle S_0,\cdots,S_{n-1}\rangle$ in terms of $\langle \Co[x_j,\ell_j]_{1\leq j \leq m}\rangle$ which is the quantity relative to the probability that some random points taken on an orthogonal comb are in a convex position (where the tooth length and position are encoded by the $[x_j,\ell_j]$'s) are given.

Prop. \ref{pro:comb-dec} provides a combinatorial-like decomposition for  $\langle \Co[x_j,\ell_j]_{1\leq j \leq m}\rangle$ implying that this quantity is a rational fraction in the coordinates of the comb teeth extremities.
Section \ref{sec:opti} is devoted to ending the proof of Theorem \ref{theo:ineq} by optimizing these formulas when $n=4$ and $n=5$.
In Section \ref{sec:CGR} some additional elements on the algebraic structure of $\langle S_0,\cdots,S_{n-1}\rangle$ are given. We end by giving a ``combinatorico-geometric'' representation of $\langle \Co[x_j,\ell_j]_{1\leq j \leq m}\rangle$.

\section{Proof of Theorem \ref{theo:ineq}: preliminaries}
\label{sec:PTineq}

\subsection{Abscissas fibration}
\label{sec:AF}

A well known property which comes from that inversible affine transformations conserve convexity and uniformity, is the following fact:
\begin{lem}\label{lem:eto} For any inversible affine map $A$ of $\mathbb{R}^2$, for any $H\in\CCS$, we have $Q^n_{A(H)}=Q^n_{H}$.
\end{lem}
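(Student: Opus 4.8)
The plan is to reduce the statement to a few elementary invariance properties of an invertible affine map $A(z)=Mz+b$ of the plane, where $M\in GL_2(\mathbb{R})$ and $b\in\mathbb{R}^2$. First I would note that $A$, being a homeomorphism, sends compact sets to compact sets, and, being affine, sends convex sets to convex sets and the interior of $H$ onto the interior of $A(H)$; hence $A(H)\in\CCS$ and $Q^n_{A(H)}$ makes sense. Next, the Jacobian of $A$ is constant equal to $|\det M|\neq 0$, so by the change of variables formula, if $Z$ is uniform on $H$ then $A(Z)$ is uniform on $A(H)$; applying this coordinatewise to a family $Z[n]$ distributed as $`P^n_H$, and using that images of independent random variables under a common deterministic map are again independent, the tuple $(A(Z_0),\dots,A(Z_{n-1}))$ is distributed as $`P^n_{A(H)}$.

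The one genuinely combinatorial point is that $A$ preserves membership in $\CP_n$: for every $n$-tuple $x[n]$, one has $x[n]\in\CP_n$ if and only if $(A(x_0),\dots,A(x_{n-1}))\in\CP_n$. Indeed, since $A$ is an affine bijection it maps the convex hull of $x\{n\}$ onto the convex hull of $\{A(x_0),\dots,A(x_{n-1})\}$ and maps extreme points to extreme points (being extreme means not being a proper convex combination of other points of the set, a condition preserved and reflected by affine bijections), and it clearly preserves and reflects coincidences among points; hence $x\{n\}$ is the vertex set of a convex $n$-gon exactly when $\{A(x_0),\dots,A(x_{n-1})\}$ is.

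Putting these together,
\[
Q^n_{A(H)}=`P\big((A(Z_0),\dots,A(Z_{n-1}))\in\CP_n\big)=`P\big(Z[n]\in\CP_n\big)=Q^n_H.
\]
I do not expect any real obstacle; the only step deserving a line of care is the combinatorial one, where one must check that all the degeneracies excluded by the definition of $\CP_n$ (equal points, or collinearities that would make some point non-extreme) are themselves affine-invariant, which holds precisely because $A$ is a bijection.
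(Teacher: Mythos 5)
Your proof is correct and follows exactly the route the paper has in mind: the paper states this lemma without proof as ``a well known property which comes from that inversible affine transformations conserve convexity and uniformity,'' and your argument is simply a careful write-up of those two invariances (constant Jacobian gives preservation of the uniform law, and affine bijections preserve the property of being in convex position). Nothing further is needed.
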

A consequence of this is that we can prove Theorem \ref{theo:main} only for convex bodies with area 1.

As represented on Fig. \ref{fig:com}, for any $H$ with area 1, denote by $x_{\min}(H)=\min\{x:(x,y)\in H\}$ and $x_{\max}(H)=\max\{y:(x,y)\in H\}$ the minimum and maximum abscissas of $H$ and let,
\be
\ov{y}_x(H)&:=& \inf\{y: (x,y)\in H \},~~~ \textrm{ for }x\in[x_{\min}(H),x_{\max}(H)]\\
\un{y}_x(H)&:=&\sup \{y: (x,y)\in H \}~~~ \textrm{ for }x\in[x_{\min}(H),x_{\max}(H)].
\ee
The width function $W_H:\mathbb{R}\to\mathbb{R}$ is defined by
\[W_H(x)=\l(\ov{y}_x(H)- \un{y}_x(H)\r) \1_{[x_{\min}(H),x_{\max}(H)]}(x).\]
The vertical segment intersecting $H$ at abscissa $x$ is denoted
\[V_H(x)=x+i\l[\un{y}_x(H),\ov{y}_x(H)\r].\]
The law of the abscissa $X$ of a uniform point $(X,Y)$ taken in $H$ has density $W_H$.
\begin{figure}[ht]
\centerline{\includegraphics{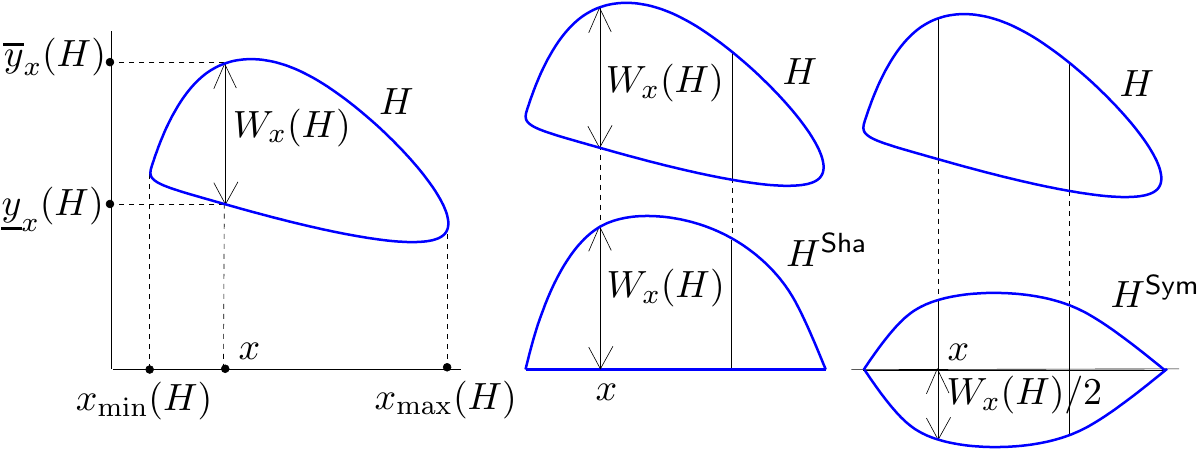}}
\captionn{\label{fig:com}Steiner symmetrization and shaking with respect to the $x$-axis.} 
\end{figure}

\begin{note}
Instead of taking $n$ points at random in $H$, in the sequel we will take $N+2$ points at random! Of course for $n=N+2$ this is equivalent, but it will be useful in our decompositions/recurrences to have a point with rank 0, and one with rank $N+1$, to get simpler formula.
\end{note}
Let $Z[N+2]$ be taken under $P_H^{N+2}$ and let $(X_j,Y_j)$ be the coordinates of $Z_j$ in the plane. Consider $\tau$ the a.s. well defined permutation in the symmetric group ${\cal S}(\cro{0,N+1})$ such that 
\be
X_{\tau(0)}\leq \cdots \leq X_{\tau(N+1)}.
\ee
By symmetry, the permutation $\tau$ is uniform in ${\cal S}(\cro{0,N+1})$ and independent from the set of values $\{X_j,0\leq j \leq N+1\}$. The density of $X_\tau:=\l(X_{\tau(j)},0\leq j \leq N+1\r)$ on $\mathbb{R}^{N+2}$ is
\[f_H(x[N+2])=(N+2)! \l(\prod_{j=0}^{N+1} W_H(x_j) \r)   \1_{\ND{N+2}}(x[N+2]) \]
where, for any $n\geq 1$, $\ND{n}=\{x[n]~: x_0< \cdots < x_{n-1}\}$ is the set of non decreasing sequences with $n$ elements.
Conditional on $(X_{\tau(j)},0\leq j \leq N+1)=x[N+2]$, the variables $Z_{\tau(0)},\cdots,Z_{\tau(N+1)}$ are independent, and $Z_{\tau(j)}$ is uniform on $V_H(x_{j})$. We introduce a crucial object of the paper:
\begin{defi}
Consider $N+2$ (vertical or not) segments $(S_0,\cdots,S_{N+1})$ of the plane and $U[N+2]$ a $N+2$-tuple of independent r.v. where $U_j$ is uniform on $S_j$. We denote by 
\ben
\big\langle S_0,\cdots,S_{N+1} \big\rangle:= P(U[N+2]\in \CP_{N+2}),
\een
the probability that the $U_j$'s are in a convex position. 
\end{defi}
To compute  $Q_H^{N+2}$, one can condition on the value of  $X_\tau$, from what we see that
\ben\label{eq:des}
Q_H^{N+2}&=&\int_{\ND{N+2}}  \bla V_H(x_0),\cdots,V_H(x_{N+1})\bra\, f_H(x[N+2])\, d{x_0}\cdots d{x_{N+1}}.  
\een
Consider $H^\Sym$ and $H^\Sha$ the convex bodies obtained from $H$ by Steiner symmetrization and shaking with respect to the x-axis~:
\be
H^\Sym&=&\l\{(x,y)~: x_{\min}(H)\leq x \leq x_{\max}(H), |y|\leq W_H(x)/2 \r\}, \\
H^\Sha&=&\l\{(x,y)~: x_{\min}(H)\leq x \leq x_{\max}(H), 0\leq y\leq W_H(x) \r\}. 
\ee
Since the width functions $W_H, W_{H^\Sha}$ and $W_{H^\Sym}$ coincide, it can be deduced that
\begin{equation}
f_H=f_{H^\Sha}=f_{H^\Sym}.
\end{equation}

In view of \eref{eq:des}, Theorem \ref{theo:main} appears to be a consequence of the following proposition:
\begin{pro}\label{pro:comp}
For $H\in \CCS$, $N\in\{2,3\}$, and $x[N+2]\in \ND{N+2}\cap [x_{\min}(H),x_{\max}(H)]^{N+2},$
\ben\label{eq:ineq}
\bla V_H^{\Sha}(x_0),\cdots,V_H^{\Sha}(x_{N+1})\bra\leq \bla V_H(x_0),\cdots,V_H(x_{N+1})\bra \leq \bla V_H^{\Sym}(x_0),\cdots,V_H^{\Sym}(x_{N+1})\bra.
\een
\end{pro}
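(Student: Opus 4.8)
The plan is to reduce Proposition~\ref{pro:comp} to a purely one-dimensional statement about a fixed collection of vertical segments and then prove that statement by an explicit algebraic comparison. First I would condition, as in Lemma~\ref{lem:cond_boundary} (stated later), on the two points $Z_{\tau(0)},Z_{\tau(N+1)}$ with extremal abscissas, i.e.\ on the leftmost and rightmost of the $N+2$ random points. Once these are fixed, the event that all $N+2$ points are in convex position splits: the remaining $N$ points must each lie on the correct side of the chord through $Z_{\tau(0)}$ and $Z_{\tau(N+1)}$, and conditionally on the side they fall on they are uniform on the corresponding sub-segment of $V_K(x_j)$. This is exactly the ``two combs'' picture: the points above the chord must form a convex chain (a comb in the sense of Defi.~\ref{def:Pcomb}), and likewise below, and the two events are conditionally independent given the two extremal points. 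Writing $\bla V_K(x_0),\dots,V_K(x_{N+1})\bra$ as an integral over the positions of the two extremal points of a product of two ``comb probabilities'' $\bla\Co[\cdot]\bra$ is the content of Prop.~\ref{pro:dec_comb}, and Prop.~\ref{pro:comb-dec} tells us each comb probability is a rational function of the coordinates of the tooth endpoints.

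The key reduction is the normalisation procedure of Section~\ref{sec:NVP}: by Defi.~\ref{def:PV} there are invertible affine maps preserving verticality that send the three families $(V_H(x_j)),(V_{H^\Sym}(x_j)),(V_{H^\Sha}(x_j))$ to families $\bar V,\bar V^{\Sym},\bar V^{\Sha}$ in which the first and the last segments coincide across all three; and since $W_H=W_{H^\Sym}=W_{H^\Sha}$, the three families have the same tooth lengths $W_H(x_j)$, differing only in the vertical placement of the intermediate teeth. So the inequality \eref{eq:ineq} becomes: among all placements of $N$ vertical segments of prescribed lengths $\ell_j=W_H(x_j)$ at prescribed abscissas $x_j$, with the first and last segment pinned, the quantity $\bla S_0,\dots,S_{N+1}\bra$ is maximized by the symmetric placement (all segments centred on a common horizontal line, the $H^\Sym$ configuration) and minimized by the flush placement (all segments sharing a common horizontal edge, the $H^\Sha$ configuration). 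By convexity of $H$, these are the two extreme positions the intermediate teeth can occupy; the general $H$ case sits ``between'' them, so one really wants a monotonicity-in-vertical-shift statement for each tooth separately.

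Concretely I would prove a one-tooth lemma: if only the $j$-th intermediate segment is slid vertically while all others are fixed, then $\bla S_0,\dots,S_{N+1}\bra$ as a function of that shift $t$ is, after the decomposition above, an explicit expression; I would show it is unimodal (or more precisely that it decreases as the tooth moves away from the ``symmetric'' height and increases as it moves away from the ``flush'' height). Plugging in the comb decomposition of Prop.~\ref{pro:comb-dec}, $\bla S_0,\dots,S_{N+1}\bra$ is an integral over the two extremal-point positions of a product of two rational functions in the tooth endpoints; differentiating in $t$ and collecting terms, the claim is that the relevant polynomial has a sign that does not change on the admissible range. Iterating this one tooth at a time (the $H\to H^\Sym$ direction by moving every intermediate tooth toward its centred position, the $H\to H^\Sha$ direction by moving every intermediate tooth down to the common baseline) gives \eref{eq:ineq}. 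The equality analysis — needed for the sharpness claims in Theorem~\ref{theo:main} — comes from tracking when these polynomials vanish identically, which forces degenerate (collinear) tooth configurations.

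The main obstacle will be the last step: controlling the sign of the derivative polynomial. For $N=2$ (the $n=4$ case) the comb probabilities are simple enough that this is essentially Blaschke's computation in new coordinates, but for $N=3$ (the $n=5$ case) one is comparing rational functions arising from combs with up to three teeth on each side of the chord, and the polynomial whose sign must be pinned down has many terms. The strategy there is to use the combinatorial decomposition of Prop.~\ref{pro:comb-dec} to write everything as a positive combination of monomials in the endpoint coordinates before differentiating, so that the sign becomes manifest term by term; making that positivity structure survive the differentiation and the integration over the two extremal points is the delicate part, and is presumably where the ``complexity of the involved polynomials'' alluded to in the introduction blocks the extension to general $n$. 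I would carry out the $N=2$ computation first as a template, then mimic it for $N=3$, isolating exactly which monomial inequalities are needed and checking each by hand or symbolic computation.
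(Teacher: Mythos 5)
Your reduction coincides with the paper's: condition on the two extremal points, split into the two combs of Lemma \ref{lem:cond_boundary} and Prop.~\ref{pro:dec_comb}, normalize by verticality-preserving affine maps so the three families share their extreme segments, and reduce to an algebraic positivity statement for the resulting rational functions. The gap is in your final step, the ``one-tooth lemma'' and the iteration that slides one intermediate segment at a time toward its centred (resp.\ flush) position. The admissible set of vertical placements is not the box $\{|\beta_j|\leq\lambda_j\}$: convexity of $H$ also imposes the slope-difference constraints $|q_j(\beta)|\leq p_j(\lambda)$ of \eref{eq:pq}, which couple \emph{consecutive} teeth, and the paper states explicitly that the lower bound $G(\lambda+\beta+L,\lambda-\beta+L,a,b)\geq G(2\lambda+L,L,a,b)$ fails if one assumes only $|\beta_j|\leq\lambda_j$. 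So your claim that the symmetric placement maximizes and the flush placement minimizes ``among all placements of $N$ vertical segments of prescribed lengths'' is too strong as stated. Moreover, sliding a single tooth $\beta_j$ while freezing the others changes $q_{j-1},q_j,q_{j+1}$ simultaneously, so the intermediate configurations of your path generically leave ${\sf Compa}(\lambda[N+2])$ even when both endpoints lie in it; monotonicity along such a path is therefore neither expected nor sufficient.

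What the paper does instead is a single global comparison on ${\sf Compa}(\lambda[N+2])$ (Prop.~\ref{pro:red} and Prop.~\ref{pro:last}): it computes the differences $G(\lambda+L,\lambda+L,a,b)-G(\lambda+\beta+L,\lambda-\beta+L,a,b)$ and $G(\lambda+\beta+L,\lambda-\beta+L,a,b)-G(2\lambda+L,L,a,b)$ in closed form and exhibits each as manifestly nonnegative under the constraints: for $n=4$ a sum of two squares in $\beta_1,\beta_2$, and for $n=5$ a nonnegative combination of products $p_k(p_i+q_i)(p_j-q_j)$ together with quadratic forms in $q$ whose positive definiteness is checked via principal minors. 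To repair your scheme you would either have to replace the tooth-by-tooth path by the straight segment $t\mapsto t\beta$ (which does stay in the convex set ${\sf Compa}$) and prove monotonicity of $G$ along it, or drop the differentiation altogether and bound the global difference directly, which is the paper's route. Note also that no sign control ``after the integration over the two extremal points'' is needed: the inequality is proved pointwise in $(u_0,u_{N+1})$, and for $n\in\{4,5\}$ the relevant differences turn out not to depend on $(u_0,u_{N+1})$ at all.
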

\begin{note} In fact, to get Theorem \ref{theo:main}, we need also to treat the equality case. In fact, a consequence of our proof below, is that equality \eref{eq:ineq} holds iff :\\
-- for the left one, if the bottom (or top) points of all segments $V_H(x_j)$ are aligned,\\
-- for the right one, if  the $V_H^{\Sym}(x_j)$ are symmetric with respect to $x$-axis.\\
Since to get $Q^n_H$ we integrate afterward against $f_H(x[N+1])$, we have $Q^n_{H^{\Sym}}=Q^n_H$ (resp. $Q^n_{H^{\Sha}}=Q^n_H$) only when $H$ is symmetric with respect to $x$-axis (resp. $x\mapsto \ov{y}_H(x)$ or  $x\mapsto \un{y}_H(x)$ are linear). By standard arguments, this additional details suffices to prove Theorem \ref{theo:main} from Prop. \ref{pro:comp}.
\end{note}
To prove Proposition \ref{pro:comp}, we will need several steps. First, we will transport the segments of interest at a more favourable places in the plane. 
\subsection{Normalized version of the problem}
\label{sec:NVP}
Even if the situation is a bit different to that of Lemma \ref{lem:eto},  it is easy to see that again, 
\begin{lem}For any  inversible affine map $A$ of the plane, any sequence of segments $S[N+2]$,
\[\bla S_0,\cdots,S_{N+1}\bra=\bla A(S_0),\cdots,A(S_{N+1})\bra.\]
\end{lem}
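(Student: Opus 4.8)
The plan is to transport, via $A$, the three ingredients entering the definition of $\bla\cdot\bra$: the segments $S_j$, the uniform laws carried by them, and the event $\CP_{N+2}$. The argument is the exact analogue of the one behind Lemma \ref{lem:eto}, just phrased for tuples of segments rather than a convex body.

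First I would record that $A$ maps segments to segments: writing each $S_j$ in terms of its endpoints $a_j,b_j$ as $\{(1-t)a_j+tb_j : t\in[0,1]\}$, affinity of $A$ gives $A(S_j)=\{(1-t)A(a_j)+tA(b_j):t\in[0,1]\}$, i.e.\ the segment with endpoints $A(a_j),A(b_j)$ (a single point if $a_j=b_j$). Using the same parametrization, if $U_j$ is uniform on $S_j$ then $U_j=(1-T_j)a_j+T_jb_j$ with $T_j$ uniform on $[0,1]$, hence $A(U_j)=(1-T_j)A(a_j)+T_jA(b_j)$ is uniform on $A(S_j)$; and independence of the $U_j$'s passes to the $A(U_j)$'s for free, being deterministic images. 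This measure-transport step — an affine map pushes the uniform law on a one-dimensional segment to the uniform law on its image, via a constant-speed reparametrization — together with the harmless degenerate case $a_j=b_j$, is the only point that deserves a word of care.

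Next I would invoke the affine invariance of convex position: since $A$ and $A^{-1}$ are bijections of the plane sending lines to lines, they preserve distinctness of points, collinearity of triples, and convexity of polygons (extreme points of a convex set map to extreme points under an invertible affine map), so $x[N+2]\in\CP_{N+2}$ if and only if $A(x)[N+2]\in\CP_{N+2}$. Combining the two observations, $(A(U_0),\cdots,A(U_{N+1}))$ is precisely a tuple realizing the law defining $\bla A(S_0),\cdots,A(S_{N+1})\bra$, so
\[
\bla A(S_0),\cdots,A(S_{N+1})\bra = P\big((A(U_0),\cdots,A(U_{N+1}))\in\CP_{N+2}\big) = P\big(U[N+2]\in\CP_{N+2}\big) = \bla S_0,\cdots,S_{N+1}\bra .
\]
There is no genuine obstacle here; the statement is a routine consequence of affine maps preserving both the uniform distribution (up to the constant Jacobian, irrelevant after normalization on each segment) and the notion of convex position.
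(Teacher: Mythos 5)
Your proof is correct and follows exactly the route the paper has in mind: the paper leaves this lemma unproved ("it is easy to see"), relying on the same two facts you spell out, namely that an invertible affine map pushes the uniform law on each segment to the uniform law on its image and preserves convex position. Your write-up simply fills in the routine details (including the harmless degenerate case of a segment reduced to a point), so nothing further is needed.
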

We will use affine maps that preserves verticality:
\begin{defi}\label{def:PV}
An affine transformation $A$ of the plane is said to preserve verticality, if $A(x,y)-A(x,y')=(0,y-y')$ for any $x,y,y'$. The set of inversible such maps is denoted ${\sf PV}$. 
\end{defi}
Any $A\in{\sf PV}$ is an affine map of the form $A(x,y)= (c_1+c_2x, y+c_3x+c_4)$ for some $c_1,c_2,c_3,c_4$ and $c_2\neq 0$.
Take two pairs of points  ${\sf pair}^{(a)}:=(z_0^{(a)},z_1^{(a)})$, for $a\in\{0,1\}$. There is a unique  element $A\in{\sf PV}$ such that $A({\sf pair}^{(0)})={\sf pair}^{(1)}$ as soon as, for $a\in\{0,1\}$, both points $z_1^{(a)},z_0^{(a)}$ are not on the same vertical line.

By three affine maps preserving verticality, we send the three families of segments appearing in \eref{eq:ineq} onto three families of segments having the same extreme segments:
\begin{defi}Let $S[N+2]$ be a sequence of $N+2$ vertical segments at successive non decreasing abscissas $x[N+2]\in \ND{N+2}$. We call {\it normalizing map } of $S[N+2]$ the map $A\in {\sf PV}$ (depending on $S[N+2]$) which sends the middle $m(S_0)$ of $S_0$ at $0$ and the middle $m(S_{N+1})$ of $S_{N+1}$ at $1$~:
\[A(x+iy)= h(x)+i(y -m(S_0)-(m(S_{N+1})-m(S_0))h(x)),\]
for  $h(x)=(x-x_0)/(x_{N+1}-x_0)$. 
We call the sequence of segments $(A(S_j),0\leq j \leq N+1)$, the normalized version of $S[N+2]$. We finally call ${\sf Normal}$ the map which sends a sequence of segments onto its normalized version (see illustration on Fig. \ref{fig:syme2}).
\end{defi}
\begin{figure}[ht]
\centerline{\includegraphics{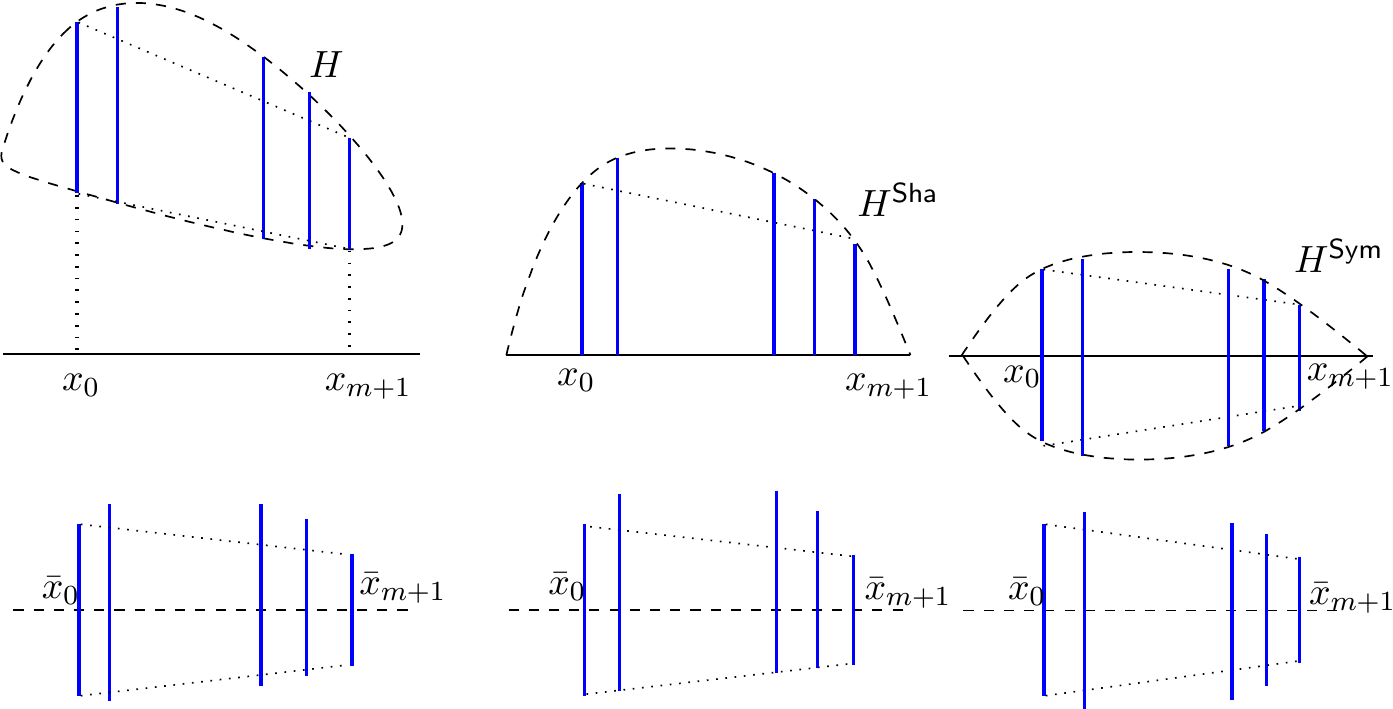}}
\captionn{\label{fig:syme2}Steiner symmetrization and shaking with respect to the $x$-axis of the vertical blue lines, followed by normalisation of a family of segments. The trapezoid (discussed in point (d)) for each sequence of segments coincides (for the pictures at the bottom). Notice that the abscissas of the normalized sequences of segments start at $\bar{x}_0=0$, and end at $\bar{x}_{m+1}=1$.  In the shaking case, the segments raise at the bottom line of the trapezoid.} 
\end{figure} 
Since the three families of segments $(V_H(x_j),0\leq j \leq N+1)$, $(V_{H^\Sym}(x_j),0\leq j \leq N+1)$ and $(V_{H^\Sha}(x_j),0\leq j \leq N+1)$ possess segments at the same abscissas with the same length, but with different ordinate, normalisation keeps this fact. After normalisation the new abscissas are
 $\bx_j=h(x_j),$ with $\bar{x}_0=0,\bar{x}_{N+1}=1$ (see Fig. \ref{fig:syme2}), and the first segment $v_0$ and  $v_{N+1}$ the last one of each family coincide. Consider  $\Gamma$ the trapezoid with sides ${v}_0,{v}_{N+1}$. Its top side is ${\sf TL}:=\{(x,L(x)): x\in[0,1]\}$ and its bottom side is  ${\sf TL}:=\{(x,-L(x)): x\in [0,1]\}$ where $L(x):=(W_H(x_0)+(W_H(x_{N+1})-W_H(x_0)){x})/2$ (see again  Fig. \ref{fig:syme2}).
Set $L_j=L(\bx_j)$, $\lambda_j= W_H(x_j)/2 -L_j$, and $\alpha_j=\ov{y}_H(x_j)-L_j-\lambda_j$ (so that $\lambda_0=\lambda_{N+1}=\alpha_0=\alpha_{N+1}=0$).\par
For a vector $\beta[N+2]$ such that $\beta_0=0$, $\beta_{N+1}=0$. Set 
\begin{equation}\label{eq:GF}
{V}^{\beta[N+2]}[N+2]:=\l(\bar{x}_j+i[-L_j-\lambda_j+\beta_j,L_j+\lambda_j+\beta_j], j\in\cro{0,N+1}\r).
\end{equation}
We call $\beta[N+2]$ the {\it symmetry defect}. The following Lemma whose proof is a simple exercise, gives a representation of the normal versions of the three families of segments of interest in terms of $V^{\beta[N+2]}[N+2]$: in the symmetric case $\beta[N+2]=0[N+2]$ (the null vector in  $\mathbb{R}^{N+2}$), in the (normalized) shaking case, the symmetry defect $\beta[N+2]$ ``is maximum'', the general case lying in between these extreme cases.
\begin{lem}
We have
\ben
{\sf Normal}(V_H(x_j),0\leq j \leq N+1)&=&{V}^{\beta[N+2]}[N+2] ,\\
{\sf Normal}(V_{H^\Sym}(x_j),0\leq j \leq N+1)&=&{V}^{0[N+2]}[N+2],\\
{\sf Normal}(V_{H^\Sha}(x_j),0\leq j \leq N+1)&=&{V}^{\lambda[N+2]}[N+2].
\een
\end{lem}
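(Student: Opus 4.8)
The plan is simply to unwind the definitions. The map ${\sf Normal}$ is completely determined by the middles $m(S_0),m(S_{N+1})$ of the two extreme segments of a family, and the three families $(V_K(x_j))_{0\le j\le N+1}$, $K\in\{H,H^\Sym,H^\Sha\}$, share their abscissas and the lengths of their segments (because $W_{H^\Sym}=W_{H^\Sha}=W_H$), so the statement reduces to computing one recentring constant per family. Concretely, the normalizing map of a family of vertical segments $S[N+2]$ sends $x+iy$ to $h(x)+i\big(y-g(h(x))\big)$, where $g$ is the affine function of the normalized abscissa with $g(0)=m(S_0)$, $g(1)=m(S_{N+1})$; hence it carries the segment $S_j$, sitting at abscissa $x_j$ with middle $m(S_j)$ and half-length $\ell_j$, onto the vertical segment at $\bar x_j=h(x_j)$ with the same half-length $\ell_j$ and new middle $m(S_j)-g(\bar x_j)$. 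Since here the common half-length is $W_H(x_j)/2=L_j+\lambda_j$, and since $\beta_0=m(S_0)-g(0)=0$ and $\beta_{N+1}=m(S_{N+1})-g(1)=0$ automatically, each normalized family is of the form $V^{\beta[N+2]}[N+2]$ with $\beta_j=m(V_K(x_j))-g_K(\bar x_j)$, and only this vector $\beta$ remains to be identified.

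First I would treat the Steiner case: every $V_{H^\Sym}(x_j)$ is centred at $0$, so $m(S_0)=m(S_{N+1})=0$, $g_{H^\Sym}\equiv 0$, and $\beta_j=0$ for all $j$, i.e.\ $V^{0[N+2]}[N+2]$. Next the shaking case: $V_{H^\Sha}(x_j)=x_j+i[0,W_H(x_j)]$ has middle $W_H(x_j)/2$, so $g_{H^\Sha}$ is the affine function taking the value $W_H(x_0)/2$ at $\bar x=0$ and $W_H(x_{N+1})/2$ at $\bar x=1$ --- and that affine function is exactly $L$, since $L$ is affine with $L(0)=W_H(x_0)/2$, $L(1)=W_H(x_{N+1})/2$. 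Hence $\beta_j=W_H(x_j)/2-L(\bar x_j)=W_H(x_j)/2-L_j=\lambda_j$, i.e.\ $V^{\lambda[N+2]}[N+2]$. Finally the general case: $V_H(x_j)=x_j+i[\un{y}_{x_j}(H),\ov{y}_{x_j}(H)]$ has middle $\mu_j:=(\un{y}_{x_j}(H)+\ov{y}_{x_j}(H))/2$, so $g_H$ interpolates $\mu_0$ and $\mu_{N+1}$, giving $\beta_j=\mu_j-\mu_0-(\mu_{N+1}-\mu_0)\bar x_j$; a one-line computation matches this with the quantity $\alpha_j$ (the top ordinate of the normalized $V_H(x_j)$ minus its half-length $L_j+\lambda_j$), and $\beta_0=\beta_{N+1}=0$ as noted, so the normalized family is $V^{\beta[N+2]}[N+2]$.

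I do not expect a genuine obstacle --- this is the ``simple exercise'' the statement announces --- but the one point worth isolating is the identity used in the shaking case: the affine interpolation of the two endpoint half-widths is \emph{literally} the trapezoid function $L$, which is precisely why subtracting it from $W_H(x_j)/2$ yields $\lambda_j$ and not some unrelated affine function. The rest is bookkeeping, and the boundary identities $\beta_0=\beta_{N+1}=\lambda_0=\lambda_{N+1}=0$ merely re-encode the defining property of a normalizing map, namely that it fixes the two extreme segments (sent to abscissas $0$ and $1$). It is also convenient to record once and for all that, since $x_0$, $x_{N+1}$ and the widths $W_H(x_j)$ agree for the three bodies, the data $h$, the $\bar x_j$, the $L_j$, the $\lambda_j$ and the half-lengths $L_j+\lambda_j$ are common to the three normalized families, which therefore differ only through the symmetry defect, exactly as the lemma asserts.
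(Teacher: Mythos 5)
Your proof is correct and is exactly the ``simple exercise'' the paper alludes to without writing it out: the paper offers no proof of this lemma, and your unwinding of the normalizing map (common abscissas and half-lengths $L_j+\lambda_j$ across the three families, with only the recentred middles differing) is the intended argument. The one ambiguity you implicitly resolve the right way is that the paper's $\alpha_j=\ov{y}_H(x_j)-L_j-\lambda_j$ only satisfies $\alpha_0=\alpha_{N+1}=0$ if $\ov{y}$ refers to the top ordinate \emph{after} normalization, which is precisely how you read it when identifying $\beta_j$ with the normalized middle.
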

Let us examine a bit the possible symmetry defects. Consider the slope differences
\ben\label{eq:inefund}
p_j(\lambda)&:=&\frac{\Delta \lambda_j}{\Delta \bx_j}- \frac{\Delta \lambda_{j+1}}{\Delta \bx_{j+1}},~~~j\in\cro{1,N}\\
q_j(\beta)&:=&\frac{\Delta \beta_j}{\Delta \bx_j}- \frac{\Delta \beta_{j+1}}{\Delta \bx_{j+1}},~~~j\in\cro{1,N}
\een
(where $\Delta y_j:= y_j-y_{j-1}$). They always satisfy
\ben\label{eq:pq}
p_j(\lambda) \geq 0, |q_j(\beta)|\leq p_j(\lambda), ~~\textrm{ for any } j\in\cro{1,N},\\
\lambda_j \geq |\beta_j| ~~\textrm{ for any } j\in\cro{0,N+1}.
\een
The set of symmetry defects compatible with $\lambda[N+2]$ is denoted by
\begin{equation}
{\sf Compa}({\lambda[N+2]})=\{\beta[N+2]:|q_j(\beta)|\leq p_j(\lambda), \beta_0=\beta_{N+1}=0, |\beta_j|\leq \lambda_j\}.
\end{equation} 
Since $\lambda_0=\lambda_{N+1}=0$, given some elements $p=(p_j,j \in \cro{1,N})$, one can find $\lambda[N+2]$ so that $p=p(\lambda)$, and $\lambda_0=\lambda_{N+1}=0$~:
\ben\label{eq:lambdavvp}
\lambda_m= \frac{\sum_{j= m }^N p_j (x_j-x_{N+1})(x_0-x_m)+\sum_{j=1}^{m-1}p_j(x_m-x_{N+1})(x_0-x_j)}{x_{N+1}-x_0}, ~~ m \in\cro{1,N}
\een
where $x_0=0,x_{N+1}=1$. The same formula holds for $\beta$ in terms of $q(\beta)$.
 
All the discussions above imply that Prop. \ref{pro:comp} as well as Theorem \ref{theo:ineq} is a consequence of:
\begin{pro}\label{pro:red} For any $N\in\{2,3\}$, any $x[N+2]\in \ND{N+2}$, $x_0=0,x_{N+1}=1$, any sequence
 $\lambda[N+2]\in[0,+\infty)^{N+2}$, any linear map $L_j=A+Bx_{j}$ with $(A,B)\in[0,+\infty)$
the map 
\ben
\app{Q_{\lambda[N+2]}}{{\sf Compa}({\lambda[N+2]})}{[0,1]}{\beta[N+2]}{\bla {V}^{\beta[N+2]}[N+2] \bra}
\een
reaches its maximum when $\beta[N+2]=0[N+2]$ and its minimum when $\beta[N+2]=\pm \lambda[N+2]$.
\end{pro}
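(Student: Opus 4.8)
\textbf{Proof plan for Proposition \ref{pro:red}.}
The plan is to derive an algebraic formula for $\bla V^{\beta[N+2]}[N+2]\bra$ as an integral (over the positions of the two extreme points of the convex hull, which are forced to be $z_0$ and $z_{N+1}$ by the abscissa ordering) of a product of two ``comb probabilities'', one for the points lying above the chord $(z_0,z_{N+1})$ and one for those lying below, as set up in Lemma \ref{lem:cond_boundary} and Proposition \ref{pro:dec_comb}. After the normalization of Section \ref{sec:NVP}, every segment $V^{\beta_j}_j$ has the same length $2(L_j+\lambda_j)$ and the same center $\beta_j$ for all three families; only the center varies with the symmetry defect $\beta[N+2]$. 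Conditioning on $z_0,z_{N+1}$ fixes the chord, and Proposition \ref{pro:comb-dec} expresses each comb probability as a rational fraction in the lengths of the parts of the teeth $S_j$ lying strictly above (resp. below) the chord. So $Q_{\lambda[N+2]}(\beta[N+2])$ becomes a genuine double integral of a rational fraction, and the task is to show this is maximized at $\beta=0$ and minimized at $\beta=\pm\lambda$ over the polytope ${\sf Compa}(\lambda[N+2])$.

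First I would reduce the optimization to a one-teeth-at-a-time, or rather one-coordinate-at-a-time, statement. Because ${\sf Compa}(\lambda[N+2])$ is a polytope and $N\in\{2,3\}$ means we are optimizing over only one or two free real parameters ($\beta_1$ when $N=2$, $(\beta_1,\beta_2)$ when $N=3$, since $\beta_0=\beta_{N+1}=0$), I would first establish \emph{concavity} (for the maximum) and the right \emph{extremal/convexity} behaviour (for the minimum) of $\beta\mapsto Q_{\lambda[N+2]}(\beta[N+2])$ as a function of each $\beta_j$ with the others frozen. For the maximum at $\beta=0$, I would also exploit the symmetry $Q_{\lambda[N+2]}(\beta[N+2])=Q_{\lambda[N+2]}(-\beta[N+2])$ (reflection across the $x$-axis preserves convex position and swaps the roles of the two combs), so that a strictly concave even function on a symmetric interval is maximized at $0$; combined with concavity in the multivariate setting this pins down $\beta=0$. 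For the minimum, the constraints $|q_j(\beta)|\le p_j(\lambda)$ and $|\beta_j|\le\lambda_j$ in \eref{eq:pq} together with \eref{eq:lambdavvp} force $\beta=\pm\lambda$ to be the two vertices of ${\sf Compa}$ that are ``extremal in every direction'' — i.e. the shaking configurations — so a convexity (or just monotonicity towards the boundary) argument, applied after the concave pieces are ruled out, delivers the minimum there. The equality-case statements in the note after Proposition \ref{pro:comp} will come from the strictness of these concavity/convexity inequalities.

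Concretely, I would compute $\frac{\partial}{\partial\beta_j}Q_{\lambda[N+2]}$ and $\frac{\partial^2}{\partial\beta_j^2}Q_{\lambda[N+2]}$ by differentiating under the integral sign. Moving a single center $\beta_j$ upward shifts the split point of tooth $S_j$ relative to the chord $(z_0,z_{N+1})$: the length of the part of $S_j$ above the chord decreases (or increases) linearly in $\beta_j$ while the part below does the opposite, the total length being fixed. Since the comb probability from Proposition \ref{pro:comb-dec} is an explicit rational fraction in these lengths, the $\beta_j$-derivatives of the integrand are explicit rational fractions, and I expect the key inequality to reduce — after clearing denominators and using $x_0<x_1<\dots<x_{N+1}$ and $L_j,\lambda_j\ge 0$ — to nonnegativity of an explicit polynomial in the variables $x[N+2],\lambda[N+2],L_j$ and the integration variables. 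This is precisely the ``comparison of polynomials'' the introduction advertises, and it is carried out in Section \ref{sec:opti}.

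\textbf{Main obstacle.} The hard part will be the last step: verifying the sign of the resulting polynomials for $n=5$ (i.e. $N=3$, two free parameters $\beta_1,\beta_2$ that interact through the shared chord and through the cross term in the Hessian). Establishing \emph{joint} concavity of $Q_{\lambda[4]}$ in $(\beta_1,\beta_2)$ — not merely coordinatewise concavity — requires controlling an off-diagonal Hessian term, and the rational fractions for the $3$-toothed comb are substantially more complicated than in the $n=4$ case; the positivity certificate for the numerator polynomial is where the real work (and the reason the method does not currently extend past $n=5$) lies. A secondary, more bookkeeping-type obstacle is checking that $\beta=\pm\lambda$ is genuinely the constrained minimizer rather than some interior critical point of a non-concave patch, which needs the geometry of the polytope ${\sf Compa}(\lambda[N+2])$ in \eref{eq:pq}–\eref{eq:lambdavvp} to be used carefully together with the sign information from the Hessian.
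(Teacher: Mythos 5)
Your overall architecture matches the paper's: reduce $\bla V^{\beta[N+2]}[N+2]\bra$ to the double integral of Prop.~\ref{pro:dec_comb}, use the rational/polynomial structure of the comb probabilities from Prop.~\ref{pro:comb-dec} (via $K$), and prove the extremality pointwise in the integration variables $(u_0,u_{N+1})$ by a polynomial sign argument — this is exactly the reduction to Prop.~\ref{pro:last}. For the \emph{maximum}, your concavity-plus-evenness plan is essentially equivalent to what the paper does: the difference $G(L+\lambda,L+\lambda,a,b)-G(L+\lambda+\beta,L+\lambda-\beta,a,b)$ turns out to be a quadratic form in $\beta$ (even, with no constant or linear part), so proving joint concavity is the same as proving that quadratic form positive semidefinite, which the paper does by checking leading principal minors of explicit $3\times3$ matrices. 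The off-diagonal Hessian terms you worry about are real but handled by brute force with a computer algebra system.

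The genuine gap is in your treatment of the \emph{minimum}. Concavity only tells you the minimum of $Q_{\lambda[N+2]}$ is attained at some vertex of the polytope ${\sf Compa}(\lambda[N+2])$; it does not single out $\beta=\pm\lambda$ among the vertices, and "monotonicity towards the boundary" from the maximizer $0$ only reaches the boundary, not a specific vertex. The paper closes this by a second, independent global comparison: it shows directly that $G(L+\lambda+\beta,L+\lambda-\beta,a,b)-G(2\lambda+L,L,a,b)\ge0$ on all of ${\sf Compa}$, by first changing variables to the slope differences $p_j(\lambda),q_j(\beta)$ of \eref{eq:inefund}--\eref{eq:lambdavvp} and then exhibiting the difference as a polynomial with nonnegative coefficients in the cone generated by the $p_k$, $(p_i+q_i)$, $(p_j-q_j)$ (all nonnegative on ${\sf Compa}$). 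This change of variables is not cosmetic: the paper observes that for $N=3$ the difference is \emph{not} nonnegative under the box constraints $|\beta_j|\le\lambda_j$ alone — the constraints $|q_j(\beta)|\le p_j(\lambda)$ are indispensable — so any vertex-comparison or monotonicity argument phrased only in the $\beta_j$ coordinates will fail. Your plan needs this second comparison and this reparametrization to go through.
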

Write for short $V^\star$ instead of  $V^{\beta[N+2]}([N+2])$ and set
\ben
\label{eq:t1}\ell^+_j&=&L_j+\lambda_j+\beta_j, ~~j \in \cro{0,N+1},\\
\label{eq:t2}\ell^-_j&=&L_j+\lambda_j-\beta_j, ~~j \in \cro{0,N+1},\\
\label{eq:t3}w_j&=&\ell^+_j+\ell^-_j=2\lambda_j+2L_j, ~~j \in \cro{0,N+1}.
\een
Consider for $j\in\cro{0,N+1}$ some r.v. $Z_j=({x_j},Y_j)$ uniform r.v. on ${V}^{\star}_j$. Let
\[U_j:= Y_0+{x_j}(Y_{N+1}-Y_0), ~~j \in \cro{0,N+1}\] 
the ordinate of the line $(Z_0,Z_{N+1})$ at abscissa ${x_j}$ (where it intersects ${V}^{\star}_j$).
Let
\[{\sf Abo}:=\{j \in \cro{1,N}~: Y_j \geq U_j\}, \] 
the set of indices corresponding to the $Z_j$'s above the line $(Z_0,Z_{N+1})$. 
For any $A\subset \cro{1,N}$, 
\[P({\sf Abo}=A~|~Y_0=u_0,Y_{N+1}=u_{N+1})=\prod_{j \in A} \frac{\ell_j^+-u_j}{w_j} \prod_{j \in \complement A} \frac{\ell_j^-+u_j}{w_j},\]
where $\complement A := \cro{1, N}\setminus A$. Now, conditionally on $(Y_0,Y_{N+1},{\sf Abo})=(u_0,u_{N+1},A)$, set
\begin{equation}\label{eq:uj}
u_j= u_0+x_{j}(u_{N+1}-u_0).
\end{equation}
We will use the following notation: when $A$ is a set of integers (indices), then for any generic variable $y$, $y\{A\}$ is the set $\{y_j,j\in A\}$, and $y(A)$ denotes the tuple $(y_j, j\in A)$ where the indices $j$ are sorted according their natural order in $A$. 

The proof of the following lemma is immediate (see Fig. \ref{fig:syme3}):
\begin{lem}\label{lem:cond_boundary}
Conditionally on $(Y_0,Y_{N+1},{\sf Abo})=(u_0,u_{N+1},A)$:\\
(a) the set $Z\{A\}$ is independent of $Z\{\complement A\}$,\\ 
(b) the r.v. $Z_j(A)$ are independent, and $Z_j$ is uniform on $x_j+i[u_j,\ell^+_j]$,\\
(c) the r.v. $Z(\complement A)$ are independent, and $Z_j$ is uniform on $x_j+i[-\ell^-_j,u_j]$,\\
(d) the points $Z[N+2]$ are in convex position iff both
$\{(0,u_0),(1,u_{N+1})\}\cup Z\{A\}$ and $\{(0,u_0),(1,u_{N+1})\}\cup Z\{\complement A\}$ are in convex position.
\end{lem}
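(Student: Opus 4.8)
\emph{Proof plan.} Parts (a)--(c) are a routine computation with product measures, and part (d) is an elementary fact of planar convexity; I treat them in that order.

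For (a)--(c) the idea is to condition in two stages. First condition on $(Y_0,Y_{N+1})=(u_0,u_{N+1})$ alone. Then $Z_0=(0,u_0)$ and $Z_{N+1}=(1,u_{N+1})$ are frozen, so the chord ordinate $U_j=Y_0+x_j(Y_{N+1}-Y_0)$ takes the deterministic value $u_j=u_0+x_j(u_{N+1}-u_0)$ of \eref{eq:uj}, while $Y_1,\dots,Y_N$ remain independent with $Y_j$ uniform on $[-\ell_j^-,\ell_j^+]$; from the monotonicity of $(u_0,u_{N+1})\mapsto u_0+x_j(u_{N+1}-u_0)$ and the identity $L_j=L_0+x_j(L_{N+1}-L_0)$ one checks that $u_j\in[-\ell_j^-,\ell_j^+]$, with strict inclusion for a.e. $(u_0,u_{N+1})$. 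Now the event $\{{\sf Abo}=A\}$ equals $\bigcap_{j\in A}\{Y_j\ge u_j\}\cap\bigcap_{j\in\complement A}\{Y_j< u_j\}$, a product event that constrains the independent coordinates $Y_1,\dots,Y_N$ one at a time. Conditioning the product law of $(Y_1,\dots,Y_N)$ on such an event preserves independence and merely restricts each marginal to the relevant half-interval: $Y_j$ becomes uniform on $[u_j,\ell_j^+]$ when $j\in A$ and uniform on $[-\ell_j^-,u_j]$ when $j\in\complement A$. Since $Z_j=(x_j,Y_j)$, this is precisely (b) and (c), and the retained joint independence of all of $\{Z_j:j\in\cro{1,N}\}$ gives (a) (in particular the independence of $Z\{A\}$ from $Z\{\complement A\}$).

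For (d), work conditionally on $(Y_0,Y_{N+1},{\sf Abo})=(u_0,u_{N+1},A)$. Since $0=x_0<x_1<\dots<x_N<x_{N+1}=1$, the points $Z_0$ and $Z_{N+1}$ are the unique elements of $Z\{N+2\}$ of least and greatest abscissa, hence vertices of $\conv(Z\{N+2\})$ in every realization. The chord $[Z_0,Z_{N+1}]$ cuts $\conv(Z\{N+2\})$ into an upper and a lower piece, so the upper boundary arc of $\conv(Z\{N+2\})$ joining $Z_0$ to $Z_{N+1}$ lies weakly above that chord and the lower arc weakly below. A point $Z_j$ with $j\in A$ lies strictly above the chord, so if it belongs to $\partial\conv(Z\{N+2\})$ it belongs to the upper arc; and since deleting all points lying strictly below the chord leaves the upper arc unchanged, the upper arc of $\conv(Z\{N+2\})$ coincides with that of $\conv(\{Z_0,Z_{N+1}\}\cup Z\{A\})$. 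Hence $Z_j$ ($j\in A$) is a vertex of $\conv(Z\{N+2\})$ iff it is a vertex of $\conv(\{Z_0,Z_{N+1}\}\cup Z\{A\})$; moreover, as all points of $\{Z_0,Z_{N+1}\}\cup Z\{A\}$ other than $Z_0,Z_{N+1}$ lie weakly above the chord, this set is in convex position iff each $Z_j$, $j\in A$, is a vertex of its convex hull. The mirror statement holds with $A$ and ``upper/above'' replaced by $\complement A$ and ``lower/below''. Finally, $Z[N+2]\in\CP_{N+2}$ iff every $Z_j$ with $j\in\cro{1,N}$ is a vertex of $\conv(Z\{N+2\})$ (the extreme points being vertices automatically); splitting this condition along $A$ and $\complement A$ and invoking the two previous equivalences yields (d). The null set of degenerate configurations (three collinear points, or some $Z_j$ lying exactly on the chord) is discarded throughout.

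The arithmetic in (a)--(c) is entirely mechanical; the one place that merits a moment's care is the geometric bookkeeping in (d)---verifying that points on the ``wrong'' side of the chord are genuinely irrelevant to the relevant boundary arc, and that convex position of each half-family is the same as all of its points being hull vertices---but this is standard once one recalls that the convex hull of a finite planar set with a prescribed leftmost and rightmost point decomposes into an upper and a lower convex chain.
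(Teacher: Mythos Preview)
Your proof is correct. The paper itself does not give a proof of this lemma: it simply declares the result ``immediate'' and refers to Figure~\ref{fig:syme3}, so your write-up is a faithful expansion of what the author left to the reader, with the same underlying idea (product-measure conditioning for (a)--(c), upper/lower convex chain decomposition for (d)).
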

\begin{figure}[ht]
\centerline{\includegraphics{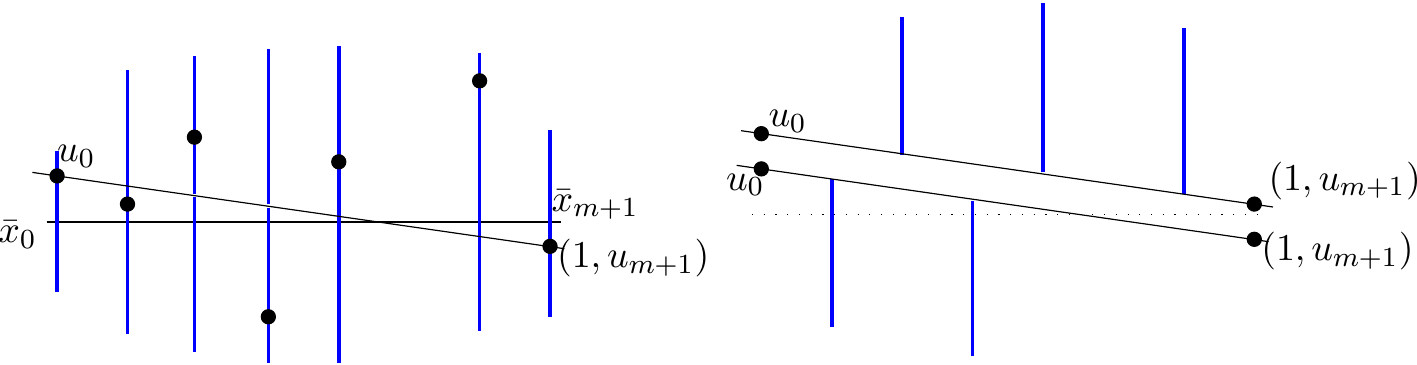}}
\captionn{\label{fig:syme3} On the left picture: the two extreme points being fixed $(Z_0,Z_{m+1})=(u_0,(1,u_{m+1}))$ being fixed, the set of indices of the points above the line is ${\sf Abo}=\{2,4,5\}$. Now, on the right one: Conditional on  ${\sf Abo}=\{2,4,5\}$, there are some uniform points above the line in each of the part of the segments $2,4$ and $5$, and some uniform points under the line in the part of the segments $1$ and $3$. Appears, the two inclined combs above and below the line. The $m+2=7$ points are in a convex positions, if in the superior comb, the 5 points are in a convex positions, and below, the 4 points also.  } 
\end{figure}
Conditionally on $(Z_0,Z_{N+1},{\sf Abo})=(u_0,(1,u_{N+1}),A)$  the points $(Z_0,Z(A),Z_{N+1})$ are uniform on a collection of segments, collection which may be seen as an inclined comb, the shaft being not orthogonal to the teeth (see Fig. \ref{fig:syme3}):
\ben
I_0&=&i[u_0,u_0],\\
I_{N+1}&=&1+i[u_{N+1},u_{N+1}],\\
I_j&=&x_j+i[u_j,\ell^+_j],~~j\in A,
\een
so that
\[P((Z_0,Z(A),Z_{N+1})\in \CP_{2+\#A}~|~(Z_0,Z_{N+1},{\sf Abo})=(u_0,(1,u_{N+1}),A))=\bla I_0, I(A),I_{N+1}\bra.\]
\begin{defi}\label{def:Pcomb} For any $x,\ell$ in $\mathbb{R}$, denote by $\CS$ the ``canonical segment''
\[\CS[x,\ell]=x+i[0,\ell].\]
For any $m\geq 0$, for any sequence $x[m+2]\in\ND{m+2}$ with $x_0=0,x_{m+1}=1$ and $\ell_0=0,\ell_1,\dots,\ell_{m}\geq 0,\ell_{m+1}=0$, we let $\Co[x_j,\ell_j]_{1\leq j \leq m}$ be the orthogonal comb (illustrated on Fig. \ref{fig:com})
\[\Co[x_j,\ell_j]_{1\leq j \leq m}:=\l(\CS[x_j,\ell_j],0\leq j \leq m+1\r).\]
Notice that the two extreme segments are reduced to a single point.
\end{defi}
\begin{figure}[ht]
\centerline{\includegraphics{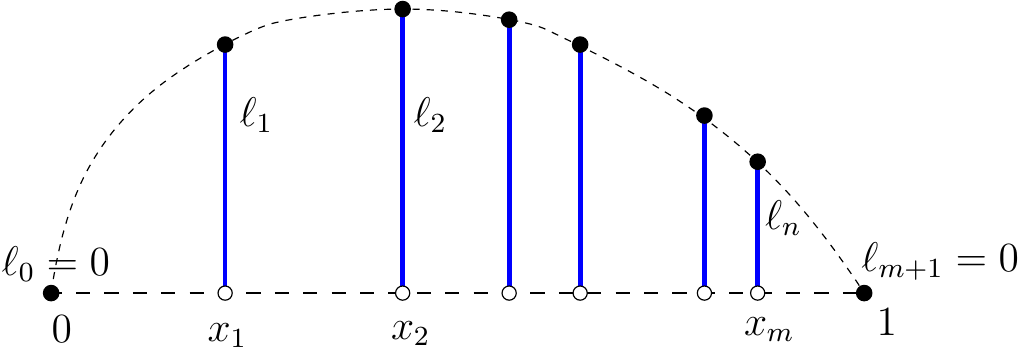}}
\captionn{\label{fig:com}The comb $\Co[x_j,\ell_j]_{1\leq j \leq m}$.} 
\end{figure}
One has
\ben\label{eq:rece}
\bla I_0,I(A),I_{N+1}\bra=\l\langle\Co\l[x_j,\ell_j^+-u_j\r]_{j\in A}\r\rangle,
\een
since the orthogonal comb  is the image of the inclined one by the affinity $(a,b) \mapsto (a,b-u_0-a(u_{N+1}-u_0))$. A consequence of Lemma \ref{lem:cond_boundary} is the following Proposition:
\begin{pro}\label{pro:dec_comb} 
We have, for $u_j= u_0+{x_j}(u_{N+1}-u_0)$,
 \begin{eqnarray} \label{eq:form}
 \bla   {V}^{\beta[N+2]}[N+2] \bra&=&\int_{-L_0}^{L_0}\int_{-L_{N+1}}^{L_{N+1}} \sum_{A\subset\{1,\cdots,N\}}   
 \prod_{j \in A} \frac{\ell_j^+-u_j}{w_j} \prod_{j \in \complement A} \frac{\ell_j^-+u_j}{w_j}  \\
 &&\times\l\langle\Co[x_j,\ell_j^+-u_j]_{j\in A}\r\rangle \l\langle\Co[x_j,\ell_j^-+u_j]_{j\in \complement A}\r\rangle
 \,\frac{du_{0}\,du_{N+1}}{w_0w_{N+1}}.
 \end{eqnarray}
\end{pro}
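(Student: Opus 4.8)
The plan is to obtain \eqref{eq:form} by iterated conditioning, simply assembling the pieces that have already been set up. First I would fix $\beta[N+2]\in{\sf Compa}(\lambda[N+2])$ and take $Z_j=(x_j,Y_j)$ independent and uniform on $V^\star_j$, so that by definition $\bla V^{\beta[N+2]}[N+2]\bra = P(Z[N+2]\in\CP_{N+2})$. Since $Z_0$ has abscissa $0$ and $Z_{N+1}$ has abscissa $1$, these are almost surely the extreme points in abscissa, and $Y_0,Y_{N+1}$ are independent with $Y_0$ uniform on $[-L_0,L_0]$ (recall $\lambda_0=\beta_0=0$, so $\ell_0^\pm=L_0$) and $Y_{N+1}$ uniform on $[-L_{N+1},L_{N+1}]$; this explains the outer double integral with normalizing factor $1/(w_0w_{N+1})$, with $w_0=2L_0$, $w_{N+1}=2L_{N+1}$.

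Second I would condition further on ${\sf Abo}=A$. Conditionally on $(Y_0,Y_{N+1})=(u_0,u_{N+1})$, the events $\{Y_j\ge u_j\}$ for $j\in\cro{1,N}$ are independent (the $Z_j$ being independent), and since $Z_j$ is uniform on $x_j+i[-\ell_j^-,\ell_j^+]$ with $U_j=u_j$ the height of the chord $(Z_0,Z_{N+1})$, one gets $P(j\in{\sf Abo})=(\ell_j^+-u_j)/w_j$ and $P(j\notin{\sf Abo})=(\ell_j^-+u_j)/w_j$; multiplying over $j$ yields the product factor $\prod_{j\in A}\frac{\ell_j^+-u_j}{w_j}\prod_{j\in\complement A}\frac{\ell_j^-+u_j}{w_j}$. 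Then Lemma \ref{lem:cond_boundary}(a)--(d) gives that, on this event, $Z\{A\}$ and $Z\{\complement A\}$ are conditionally independent, the points within each family are uniform on the relevant sub-segments $x_j+i[u_j,\ell_j^+]$ (resp. $x_j+i[-\ell_j^-,u_j]$), and $Z[N+2]\in\CP_{N+2}$ exactly when both $\{(0,u_0),(1,u_{N+1})\}\cup Z\{A\}$ and $\{(0,u_0),(1,u_{N+1})\}\cup Z\{\complement A\}$ are in convex position. By conditional independence the probability of the intersection is the product of the two probabilities, each of which is a bracket $\bla I_0,I(A),I_{N+1}\bra$ (resp. the analogous lower one). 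Finally \eqref{eq:rece} rewrites each such bracket as $\bla\Co[x_j,\ell_j^+-u_j]_{j\in A}\bra$ (resp. $\bla\Co[x_j,\ell_j^-+u_j]_{j\in\complement A}\bra$) via the verticality-preserving affinity $(a,b)\mapsto(a,b-u_0-a(u_{N+1}-u_0))$, and the lower comb is turned upright by the analogous reflection-affinity. Summing over $A\subset\cro{1,N}$ and integrating over $(u_0,u_{N+1})$ gives \eqref{eq:form}.

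Since each step — the outer conditioning on the extreme heights, the conditional independence and product formula for ${\sf Abo}=A$, Lemma \ref{lem:cond_boundary}, and the affine normalization \eqref{eq:rece} — has already been justified in the excerpt, the proof is essentially bookkeeping. The only point requiring a little care, and thus the main (mild) obstacle, is checking the chain of conditional independences in the right order: one conditions first on $(Y_0,Y_{N+1})$, then on ${\sf Abo}$, and must verify that after conditioning on ${\sf Abo}=A$ the positions of the remaining $Z_j$ along their sub-segments really are independent and uniform (this is exactly Lemma \ref{lem:cond_boundary}(b)--(c)), so that the joint convexity probability factorizes as a genuine product of two comb-brackets rather than merely a correlated pair. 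Once this is in place, everything multiplies and integrates cleanly.
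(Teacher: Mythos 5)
Your proof is correct and follows exactly the route the paper intends: the paper presents the proposition as "a consequence of Lemma \ref{lem:cond_boundary}", and your write-up simply makes explicit the same chain — conditioning on $(Y_0,Y_{N+1})$, then on ${\sf Abo}=A$ using the displayed product formula for $P({\sf Abo}=A\,|\,Y_0,Y_{N+1})$, invoking Lemma \ref{lem:cond_boundary}(a)--(d) for the factorization, and applying \eref{eq:rece} (plus the reflection for the lower family) to pass to orthogonal combs. No gaps; the bookkeeping is exactly what the paper leaves implicit.
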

Next proposition provides the crucial close formula for $\l\langle\Co[ x_j,\ell_j]_{1\leq j\leq m}\r\rangle$, which appears to be decomposable (see also Fig. \ref{fig:comb2}), and to be a rational fraction in the $x_j$'s and the $\ell_j$'s
\begin{pro}\label{pro:comb-dec}
We have  $\l\langle\Co[\varnothing]\r\rangle=1$, $\l\langle\Co[x_1,\ell_1]\r\rangle=1$, and for $m\geq 2$
\ben\label{eq:qdsd}
\l\langle\Co\Big[x_j,\ell_j\Big]_{1 \leq j \leq m}\r\rangle& = & 
\frac1m \sum_{j=1}^m  \l(\prod_{1\leq k<j}\frac{\ell_k-\frac{x_k}{x_j}\ell_j}{\ell_k}\r)
\l(\prod_{j<k\leq m}\frac{\ell_k-\frac{1-x_k}{1-x_j}\ell_j}{\ell_k}\r)\\
&\times& \l\langle\Co\l[\frac{x_k}{x_j},\ell_k-\frac{x_k}{x_j}\ell_j\r]_{1\leq k < j}\r\rangle
\l\langle\Co\l[\frac{x_k-x_j}{1-x_j},\ell_k-\frac{1-x_k}{1-x_j}\ell_j\r]_{j<k\leq m}\r\rangle.
\een
\end{pro}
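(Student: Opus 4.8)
The plan is to prove the recursion by a direct probabilistic decomposition, conditioning on which tooth carries the relatively highest random point. The cases $m=0,1$ are immediate. For $m\ge 2$ I will use — as holds for every comb produced in Section~\ref{sec:NVP}, since the upper boundary of a convex body is concave — that $\ell_1,\dots,\ell_m>0$ and the comb is \emph{convex}, i.e.\ $(0,0),(x_1,\ell_1),\dots,(x_m,\ell_m),(1,0)$ is a strictly concave chain; write $\sigma_i:=(\ell_i-\ell_{i-1})/(x_i-x_{i-1})$ for its slopes, so $\sigma_1>\dots>\sigma_{m+1}$ (with $\ell_0=\ell_{m+1}=0$). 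Realize the comb's random points as $Z_i=(x_i,\ell_iU_i)$, $1\le i\le m$, with $U_1,\dots,U_m$ i.i.d.\ uniform on $[0,1]$, and $Z_0=(0,0)$, $Z_{m+1}=(1,0)$. Since the abscissas increase and $Y_i:=\ell_iU_i>0$ a.s.\ for $1\le i\le m$ while $Y_0=Y_{m+1}=0$, the lower hull of these $m+2$ points is $[Z_0,Z_{m+1}]$; hence a.s.\ $Z[m+2]\in\CP_{m+2}$ iff the chain $Z_0,\dots,Z_{m+1}$ is (strictly) concave, i.e.\ its slopes $s_i:=(Y_i-Y_{i-1})/(x_i-x_{i-1})$ satisfy $s_1>\dots>s_{m+1}$.

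The first step is to let $I$ be the a.s.\ unique index in $\{1,\dots,m\}$ maximizing $U_i$; since the $U_i$ are i.i.d.\ and continuous, $\mathbb{P}(I=j)=1/m$, hence
\[\bla\Co[x_j,\ell_j]_{1\le j\le m}\bra=\frac1m\sum_{j=1}^m\mathbb{P}\big(\text{chain concave}\mid I=j\big),\]
which already produces the $\frac1m$ and the sum over $j$. Conditionally on $\{I=j\}$ and $U_j=t$, the $(U_k)_{k\ne j}$ are i.i.d.\ uniform on $[0,t]$ and independent, and $Z_j=(x_j,\ell_jt)$. Cutting the chain at the vertex $Z_j$, I will use that it is concave iff (i) the left chain $Z_0,\dots,Z_{j-1},(x_j,\ell_jt)$ is concave, (ii) the right chain $(x_j,\ell_jt),Z_{j+1},\dots,Z_{m+1}$ is concave, and (iii) $s_j>s_{j+1}$; moreover (i) depends only on $(Z_k)_{k<j}$ and (ii) only on $(Z_k)_{k>j}$, so these two events are conditionally independent.

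The crucial lemma will be that, on $\{I=j\}$, (iii) is automatic once (i) and (ii) hold: (i) forces $Z_{j-1}$ above the chord $[Z_0,(x_j,\ell_jt)]$, so $s_j\le\ell_jt/x_j$, while $U_{j-1}<U_j=t$ gives $Y_{j-1}<\ell_{j-1}t$, so $s_j\ge(\ell_j-\ell_{j-1})t/(x_j-x_{j-1})=t\sigma_j$; symmetrically $s_{j+1}\le t\sigma_{j+1}$; and $t>0$, $\sigma_j>\sigma_{j+1}$ then give $s_j\ge t\sigma_j>t\sigma_{j+1}\ge s_{j+1}$. Granting this, $\mathbb{P}(\text{chain concave}\mid I=j,U_j=t)=g_jh_j$, with $g_j:=\mathbb{P}((\mathrm{i})\mid I=j,U_j=t)$ and $h_j$ its mirror analogue. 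To evaluate $g_j$: concavity of the left chain requires each $Z_k$ ($k<j$) above the chord $[Z_0,(x_j,\ell_jt)]$, i.e.\ above height $\tfrac{x_k}{x_j}\ell_jt$ — which is $\le\ell_kt$ by convexity of the comb — an event of probability $\tfrac{\ell_k-\frac{x_k}{x_j}\ell_j}{\ell_k}$ leaving $Z_k$ conditionally uniform; applying the invertible verticality-preserving affine map sending $Z_0\mapsto(0,0)$, $(x_j,\ell_jt)\mapsto(1,0)$, then a global vertical rescaling by $1/t$, and using invariance of $\bla\cdot\bra$ under these, the remaining points become the (again convex) comb $\Co[\tfrac{x_k}{x_j},\ell_k-\tfrac{x_k}{x_j}\ell_j]_{1\le k<j}$. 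Thus
\[g_j=\Big(\prod_{1\le k<j}\tfrac{\ell_k-\frac{x_k}{x_j}\ell_j}{\ell_k}\Big)\,\bla\Co[\tfrac{x_k}{x_j},\ell_k-\tfrac{x_k}{x_j}\ell_j]_{1\le k<j}\bra,\]
independent of $t$, and symmetrically (replacing $x\mapsto 1-x$, $[0,x_j]$ by $[x_j,1]$) $h_j$ equals the second product-times-bracket in the statement. Substituting $g_jh_j$ into the first display yields the formula.

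I expect the main obstacle to be step (iii): cutting the chain at a fixed vertex does \emph{not} by itself decouple the problem, because of the slope coupling $s_j>s_{j+1}$, and the whole argument rests on the right choice of conditioning event, $I=\argmax_iU_i$ — not, say, the index of the highest point $Z_i$ — precisely so that the comb's concavity can enter through the two one-sided bounds $s_j\ge t\sigma_j$ and $s_{j+1}\le t\sigma_{j+1}$. A secondary, routine but error-prone point is the bookkeeping of the conditional laws and of the affine normalizations: checking that the two subcombs inherit endpoints $0$ and $1$, tooth abscissas $x_k/x_j$ (resp.\ $(x_k-x_j)/(1-x_j)$) and tooth lengths $\ell_k-\tfrac{x_k}{x_j}\ell_j$ (resp.\ $\ell_k-\tfrac{1-x_k}{1-x_j}\ell_j$), and that the $t$-dependence cancels.
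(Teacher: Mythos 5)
Your proof is correct and follows essentially the same route as the paper: condition on the index $J$ of the maximal normalized uniform $W_j$ (giving the $1/m$ and the sum), observe that the conditional probabilities of lying above the two chords and the resulting sub-comb structure are independent of $W_J=w$, and map the two inclined sub-combs back to canonical ones by verticality-preserving affinities. Two points in your write-up are genuine improvements on the paper's own argument rather than deviations from it: first, your ``crucial lemma'' (that the slope condition $s_j>s_{j+1}$ at the pivot is automatic on $\{J=j\}$, via $s_j\ge w\sigma_j>w\sigma_{j+1}\ge s_{j+1}$) proves a step the paper asserts without justification when it claims that $U[m+2]\in\CP_{m+2}$ iff the points avoid the triangle and the two sub-chains are separately in convex position; second, the concavity of the chain $(0,0),(x_1,\ell_1),\dots,(x_m,\ell_m),(1,0)$ that this lemma uses is genuinely needed --- for non-concave teeth the factors $\bigl(\ell_k-\tfrac{x_k}{x_j}\ell_j\bigr)/\ell_k$ can be negative and the stated identity actually fails (e.g.\ $m=2$, $\ell_1\to0$, $\ell_2$ fixed) --- so the proposition must be read, as you do, as restricted to the concave combs that actually arise in Section \ref{sec:NVP} and that are stable under the recursion.
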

Additional elements on $\l\langle\Co\Big[x_j,\ell_j\Big]_{1 \leq j \leq m}\r\rangle$ are given below the proof and in Section \ref{sec:CGR}.
\begin{figure}[ht]
\centerline{\includegraphics{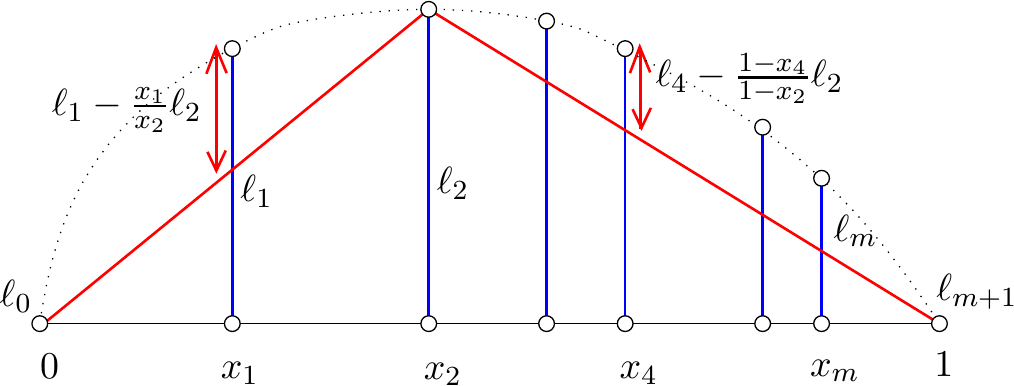}}
\captionn{\label{fig:comb2}Illustration of a term in the decomposition of $\l\langle\Co\big[x_,\ell_j\big]_{1 \leq j \leq m}\r\rangle$.} 
\end{figure}
\begin{proof} Assume $m\geq 2$ and take again the same notation as in Def. \ref{def:Pcomb}. For $t\in[0,1]$, consider
\[\Gamma_t:=\Co[x_j ,(1-t)\ell_j]\] 
so that $\Gamma_0=\Co[x_j,\ell_j]$ and $\Gamma_1=[0,1]$, and $\Gamma_t$ is obtained from $\Gamma_0$ by the affine map $A_t(x,y)=(x,y(1-t))$ which keeps $[0,1]$ unchanged and reduces the length of the teeth of the comb. As usual take some independent r.v $U_j$, where $U_j$ is uniform on $x_j+i[0,\ell_j]$ for $j\in\cro{1,m}$, and $U_0=0$, $U_{m+1}=1$. Of course 
\[(U_j, j \in \cro{1,m})\eqd(x_j+i\ell_j W_j, j \in \cro{1,m})\]
for $W_1,\cdots,W_{m}$ i.i.d. uniform on $[0,1]$.
Let $W^\star= \max\{W_j,j \in \cro{1,m}\}$ and $J$ the a.s. unique element $j$ such that $W_{J}=
W^\star$. By symmetry $J$ is uniform on $\cro{1,m}$ and conditionally on the event ${\sf Ev}_{j,w}:=\{(J,W_{J})=(j,w)\}$ the r.v. $W_k$, for $k\neq j$ are uniform on $[0,w]$, so that now, the r.v. $U_k,k\neq j$ are uniform in their corresponding tooth in $\Co[x_k,w\ell_k]_{1\leq k\leq m, k\neq j}$. Conditionally on ${\sf Ev}_{j,w}$,  $U[m+2]\in \CP_{m+2}$ iff  all the r.v. $U_k$ for $k \neq j$ are outside the triangle $U_0,U_{m+1},U_j$, and simultaneously $(U_0,\cdots,U_{j-1},U_{j}=x_j+iw \ell_j)\in \CP_{j+1}$ and $(U_j=x_j+iw \ell_j,U_{j+1},\cdots,U_{m+1})\in \CP_{m+2-j}$. Now,
\begin{itemize}
\item[$(a)$] the r.v. $U_k$ for $k \neq j$ are outside the triangle $U_0,U_{m+1},U_j$ if:
\begin{itemize}
\item[$\bullet$] for $k\leq j$ if $U_k$ belongs to $\bar{I}_k:=[\frac{x_k}{x_j} w\ell_j ,w\ell_k]$. This occurs (cond. on ${\sf Ev}_{j,w}$) with probability  $\frac{w\ell_k- \frac{x_k}{x_j} w\ell_j}{w\ell_k}=\frac{\ell_k- \frac{x_k}{x_j} \ell_j}{\ell_k}$ (which does not depend on $w$),
\item[$\bullet$] for  $k\geq j$, if $U_k$ belongs to $\bar{I}_k:=[w\frac{1-x_k}{1-x_j}\ell_j,w\ell_k]$. This occurs  (cond. on ${\sf Ev}_{j,w}$) with probability $\frac{w\ell_k-w\frac{1-x_k}{1-x_j}\ell_j}{w\ell_k}=\frac{\ell_k-\frac{1-x_k}{1-x_j}\ell_j}{\ell_k}$ (which does not depend on $w$).
\end{itemize}
\item[$(b)$] by $(a)$, for $k\neq j$, conditionally on ${\sf Ev}_{j,w}$, $U_k$ is uniform on $\bar{I}_k$. Then, we are in a situation where $U_0=0,U_1,\cdots,U_{j-1},U_{j}=x_j+iw \ell_j$ are uniform in an inclined comb (formed by the segments $\bar{I}_k,k \geq j$), as well as $U_{j}=x_j+iw \ell_j,U_{j+1},\cdots,U_{m+1}.$ The first inclined comb can be sent by an affinity on $\Co\l[\frac{x_k}{x_j},\ell_k-\frac{x_k}{x_j}\ell_j\r]_{1\leq k < j}$ and the second one on $\Co\l[\frac{x_k-x_j}{1-x_j},\ell_k-\frac{1-x_k}{1-x_j}\ell_j\r]_{j<k\leq m}$. 
\end{itemize}
We have now to put all the pieces together: under the conditioning ${\sf Ev}_{j,w}$, the probability that $U[m+2]\in\CP_{m+2}$ does not depend on $w$. When we integrate along the distribution of $W^\star$ this brings an ``extra factor'' of 1. Now, $J$ is uniform (and independent from $W^\star$) which brings a summation and a factor term $1/m$. The rest of the contributions are clear.
\end{proof}
Introduce
\ben\label{eq:K-toPC}
K[x_j,\ell_j]_{1 \leq j \leq m}:=\l(\prod_{k=1}^m \ell_k\r)\l\langle\Co\Big[x_j,\ell_j\Big]_{1 \leq j \leq m}\r\rangle.
\een
One sees that $K$ solves the following simpler recurrence: $K[ \varnothing]=1$, $K[x_1,\ell_1]=\ell_1$ and
\ben\label{eq:rec}
K[x_j,\ell_j]_{1 \leq j \leq m}& = & 
\frac1m \sum_{j=1}^m  \ell_j K\l[\frac{x_k}{x_j},\ell_k-\frac{x_k}{x_j}\ell_j\r]_{1\leq k < j}
K\l[\frac{x_k-x_j}{1-x_j},\ell_k-\frac{1-x_k}{1-x_j}\ell_j\r]_{j<k\leq m}.
\een
The first non trivial formula is $K[x_j,\ell_j]_{1\leq j \leq 2}$~:
\ben\label{eq:K2}
K[x_j,\ell_j]_{1\leq j \leq 2}&=&
\l(   \frac {\ell_1}{x_1(1-x_1)}   ( {\frac {\ell_2-\ell_1}{x_2-x_1}}-{\frac {\ell_2}{x_2-1}} ) +  \frac {\ell_2}{x_2(1-x_2)}   ( {\frac {\ell_1}{x_1}}-{\frac {\ell_1-\ell_2}{x_1-x_2}} )  \r)\\
\nonumber&\times& {x_1 ( x_2-x_1 )  ( 1-x_2 )}/2. 
\een
It is immediate that $K[x_j,\ell_j]_{1 \leq j \leq m}$ is a polynomial in the $\ell_j's$, with 
${\sf degree}\l(\prod_{j=1}^m\ell_j^{n_j}\r):=\sum {n_j}=m$ and coefficients in the fields $\mathbb{Q}(x_1,\cdots,x_n)$. And then \eref{eq:form} can be rewritten
\ben \label{eq:form2}
\bla {V}^{\beta[N+2]}[N+2] \bra&=&\int_{-L_0}^{L_0}\int_{-L_{N+1}}^{L_{N+1}}  \frac{g(\ell^+,\ell^-,u_0,u_{N+1})}{\prod_{j=0}^{N+1}w_j}du_{0}du_{N+1}
\een
where
\ben
 g(\ell^+,\ell^-,u_0,u_{N+1})=\sum_{A\subset\cro{1,N}}   K[x_j,\ell_j^+-u_j]_{j\in A}\,\, K[x_j,\ell_j^-+u_j]_{j\in \complement A}. \een
Hence $\bla  {V}^{\beta[N+2]}[N+2]\bra $ can be explicitly computed, as integrated a polynomial is just an exercise.  
The symmetry of the integration domain corresponding to the symmetry of $V^{\star}_0$ and $V^{\star}_{N+1}$ leads us to set
\ben\label{eq:GG}
G(\ell^+,\ell^-,u_0,u_{N+1}):=\sum_{(\varepsilon,\varepsilon')\in\{-1,1\}^2} g(\ell^+,\ell^-,\varepsilon u_0,\varepsilon' u_{N+1})
\een
and inflating the short notations $\ell^+,\ell^-,{V}^{\star}$ (introduced in \eref{eq:t1}, \eref{eq:t2},\eref{eq:t3}) we have 
\be
\bla {V}^{\beta[N+2]}[N+2] \bra= \frac{1}{\prod_{j=0}^{N+1}w_j}\int_{0}^{L_0}\int_{0}^{L_{N+1}}  {G(L+\lambda+\beta,L+\lambda-\beta,u_0,u_{N+1})}du_0du_{N+1}.
\ee
So Prop. \ref{pro:red}, as well as Theorem \ref{theo:ineq} appear to be a consequence of the following Proposition:
\begin{pro}\label{pro:last}
Under the hypothesis of Prop. \ref{pro:red}, for any $u_0,u_{N+1}\in[0,L_0]\times[0,L_{N+1}]$, the following map defined on  ${\sf Compa}({\lambda[N+2]})$
\[\beta[N+2]\mapsto G(L+\lambda+\beta,L+\lambda-\beta,u_0,u_{N+1})\]
reaches its maximum when $\beta[N+2]=0[N+2]$ and its minimum when $\beta[N+2]=\pm \lambda[N+2]$.
\end{pro}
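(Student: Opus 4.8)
The plan is to reduce everything to a pointwise comparison of the polynomial $G$ as a function of the symmetry defect $\beta[N+2]$, and then to exploit the algebraic structure of $K[x_j,\ell_j]_{1\le j\le m}$ coming from Proposition \ref{pro:comb-dec}. First I would fix $u_0,u_{N+1}$ and unpack $G$ explicitly for $N=2$ and $N=3$ using the recurrence \eref{eq:rec}: from \eref{eq:K2} and one more iteration of \eref{eq:rec} one gets $K[x_j,\ell_j]_{j\in A}$ as a homogeneous polynomial of degree $\#A$ in the tooth-lengths, with coefficients in $\mathbb{Q}(x_1,\dots,x_N)$ that (crucially) are \emph{nonnegative} on the relevant range $0=x_0<x_1<\dots<x_{N+1}=1$ — this positivity is the algebraic fact I would isolate and prove first, since the teeth of the combs appearing in \eref{eq:form} all have nonnegative lengths (the factors $\ell_k-\frac{x_k}{x_j}\ell_j$ etc. are exactly the ``outside the triangle'' probabilities, hence in $[0,1]$). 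Then $g$ and $G$ become polynomials in $\beta[N+2]$ through $\ell^\pm_j=L_j+\lambda_j\pm\beta_j$, and I want to show $G$ is maximized at $\beta=0$ and minimized at $\beta=\pm\lambda$ over the polytope ${\sf Compa}(\lambda[N+2])$.

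The key structural observation is the symmetrization built into \eref{eq:GG}: summing over $(\varepsilon,\varepsilon')\in\{-1,1\}^2$ and over complementary subsets $A,\complement A$ makes $G$ invariant under the sign flip $\beta\mapsto -\beta$ (this swaps $\ell^+\leftrightarrow\ell^-$, $A\leftrightarrow\complement A$, $u_j\leftrightarrow -u_j$). Consequently $G$, as a function of $\beta[N+2]$, is even in an appropriate sense, so along any segment $t\mapsto t\beta^{\mathrm{dir}}$ in the polytope it suffices to control convexity: if I can show $G$ is a \emph{convex} function of $\beta$ on ${\sf Compa}(\lambda[N+2])$, then its maximum on this convex body is attained at a vertex, and the symmetry together with the description of ${\sf Compa}$ (the constraints $|q_j(\beta)|\le p_j(\lambda)$, $|\beta_j|\le\lambda_j$, $\beta_0=\beta_{N+1}=0$) forces the extreme vertices to be $\beta=\pm\lambda[N+2]$ for the min; for the max at $\beta=0$ I would instead show directly that $G(L+\lambda+\beta,L+\lambda-\beta,\cdot)\le G(L+\lambda,L+\lambda,\cdot)$ term-by-term, using an inequality of the shape $P(a+s)\,P(a-s)\le P(a)^2$ after regrouping the sum over $A$ with its complement — i.e. a log-concavity/AM–GM type estimate on each matched pair $\big(K[\cdot,\ell^+]_A K[\cdot,\ell^-]_{\complement A}\big)$ summed symmetrically over $\varepsilon,\varepsilon'$.

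Concretely the steps are: (1) prove nonnegativity of the coefficients of $K[x_j,\ell_j]_{1\le j\le m}$ in the $\ell$'s for $0<x_1<\dots<x_m<1$, by induction on $m$ via \eref{eq:rec}, checking the base cases $K[\varnothing]=1$, $K[x_1,\ell_1]=\ell_1$ and that the substitutions $x_k\mapsto x_k/x_j$ and $x_k\mapsto(x_k-x_j)/(1-x_j)$ preserve the ordering and the interval; (2) write out $G$ for $N=2$ and $N=3$ as an explicit polynomial in $\beta$ and in $u_0,u_{N+1}$, using \eref{eq:GG} to make the $\beta\mapsto-\beta$ symmetry manifest; (3) establish the pointwise pair inequality giving the $\beta=0$ maximum; (4) establish convexity of $\beta\mapsto G$ on ${\sf Compa}(\lambda[N+2])$ (equivalently, nonnegativity of the Hessian, which reduces by homogeneity and the positivity of step (1) to a finite check), and identify the minimizing vertices as $\pm\lambda[N+2]$ using the polytope description and formula \eref{eq:lambdavvp}. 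The main obstacle I expect is step (4) for $N=3$: the polynomial $G$ is genuinely complicated (degree up to $3$ in the $\ell$'s, times the quadratic $u$-dependence), and while the symmetrization collapses many terms, verifying convexity over the \emph{entire} polytope ${\sf Compa}(\lambda[N+2])$ — rather than just on its vertices — will likely require a careful grouping of monomials so that every surviving coefficient is manifestly nonnegative given step (1); if a clean convexity argument resists, the fallback is a direct case analysis on the faces of ${\sf Compa}(\lambda[N+2])$, reducing the number of free $\beta_j$'s one constraint at a time, which is exactly the kind of optimization Section \ref{sec:opti} is set up to carry out.
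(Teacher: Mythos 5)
Your step (1) is false, and it is the foundation of the rest of the plan: $K[x_j,\ell_j]_{1\leq j\leq m}$ does \emph{not} have nonnegative coefficients as a polynomial in the $\ell_j$'s. Already in \eref{eq:K2} the coefficient of $\ell_1^2$ is $-\tfrac{1-x_2}{2(1-x_1)}<0$ (the factors $\ell_k-\tfrac{x_k}{x_j}\ell_j$ are nonnegative \emph{quantities} on the relevant domain, but expanding them produces negative monomial coefficients). The actual proof cannot work monomial-by-monomial in the $\ell_j$'s; the positivity only emerges after the change of variables \eref{eq:lambdavvp} to the slope differences $p_j(\lambda)$, $q_j(\beta)$, and after regrouping the (massively cancelling) expansion of the difference $D$ into the cone generated by products of $p_k$, $p_i+q_i$, $p_j-q_j$ with coefficients nonnegative on $0\leq x_1\leq x_2\leq x_3\leq 1$ — that regrouping, carried out explicitly (with computer algebra for $N=3$), is the substance of the argument, not a consequence of a general positivity of $K$. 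Relatedly, your plan underuses the constraint $|q_j(\beta)|\leq p_j(\lambda)$: the minoration genuinely fails if one only assumes $|\beta_j|\leq\lambda_j$, so any argument for the minimum must be organized around the $q$-variables from the start.

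Your convexity step is also reversed and, even corrected, incomplete. A convex function on a polytope attains its \emph{maximum} at a vertex, so convexity of $\beta\mapsto G$ would contradict the claimed interior maximum at $\beta=0$; what actually holds (and is what the explicit computations establish, via the positive definiteness of the quadratic forms $f_1$ and $M^{(i)}$ in $q$) is that $G(0)-G(\beta)$ is a nonnegative quadratic-type form, i.e.\ concavity in $q$. But concavity only places the minimum at \emph{some} vertex of ${\sf Compa}(\lambda[N+2])$, and this polytope has of the order of $2^N$ sign-pattern vertices ($q_j=\pm p_j$ for each $j$), of which only two are $\pm\lambda[N+2]$; identifying the all-plus/all-minus vertices as the minimizers is precisely what the cone decomposition of $G(\beta)-G(\pm\lambda)$ into terms $c_{k,i,j}\,p_k(p_i+q_i)(p_j-q_j)$ with $c_{k,i,j}\geq 0$ accomplishes, and your plan supplies no substitute for it. Finally, the proposed term-by-term bound $P(a+s)P(a-s)\leq P(a)^2$ for the maximum does not apply as stated: the pairing of $A$ with $\complement A$ in $g$ compares $K_A(\ell^+-u)K_{\complement A}(\ell^-+u)$ at shifted arguments involving $u$ as well as $\beta$, and the inequality $G(\beta)\leq G(0)$ is obtained in the paper only after global cancellations across all subsets $A$ and all signs $(\varepsilon,\varepsilon')$, not from a per-pair log-concavity estimate. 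The parts of your plan that do match the paper are the evenness of $G$ in $\beta$ and the ultimate reduction to brute-force polynomial positivity for $N\in\{2,3\}$.
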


\section{Optimization, and end of the proof of Theorem \ref{theo:ineq}:}
\label{sec:opti}

We have now enough information to proceed to the optimization of $\bla   {V}^{\beta[N+2]}[N+2] \bra$ for $N\in\{2,3\}$, or rather, to proceed to the optimization of $G(L+\lambda+\beta,L+\lambda-\beta,u_0,u_{N+1})$. 
In Section \ref{seq:AR} we provide a formula for $K$ as a  sum involving $2^m$ terms (choices of $(A,\complement A)$) each of them being a product of $m$ terms. This huge number of terms and their complexity of course makes of the optimization problem a difficult task. We tried to treat larger $N$ by multiple methods including convexity, recurrence, variation calculus, decomposition of $g$ by packing its terms.

In the case $N\in\{2,3\}$ the optimization can be achieved by brute force. When $N=2$ this provides an alternative proof to that of Blaschke. The case $N=3$ needs some important computations resources, difficult to handle without the assistance of a computer algebra system.
\begin{note}In the sequel, we will often have to prove that a linear function of $x_1$ (resp. $x_2$ or $x_3$) is positive on $A_1=[0,x_2]$ (resp. $A_2=[x_1,x_3]$, or $A_3=[x_2,1]$). We will say that a polynomial $P\in\mathbb{R}[x_1,x_2,x_3]$   is in ${\sf Lin}(x_j)$ if the degree of $P$ in $x_j$ is 1. To prove the positivity of such a polynomial on $A_j=[\underbar{A}_j,\bar{A}_j]$ we will only prove the positivity of the pair  $P(\underbar{A}_j),P(\bar{A}_j)$. Notice that this pair determines $P$ (which is 
$P=x_j\frac{(P(\underbar{A}_j)-P(\bar{A}_j))}{\underbar{A}_j-\bar{A}_j}+\frac{(\bar{A}_jP(\underbar{A}_j)-\underbar{A}_jP(\bar{A}_j))}{\bar{A}_j-\underbar{A}_j}$). In the sequel, often, we will describe $P$ by giving  $P(\underbar{A}_j),P(\bar{A}_j)$ only.
\end{note}

\subsection{Optimization: Case $n=4$ (that is $N=2$)}

We prove here Proposition \ref{pro:last} in the case $n=4$, that is $N=2$. We then fix, $L(x)=l_0+(l_1-l_0)x$ for some $(l_0,l_1)\in[0,+\infty)^2$, and $\beta[4]\in {\sf Comp}({\lambda[4]})$. with the formula we have given in the previous sections, we compute
\ben\label{eq:dif1}
G({\lambda}+L,\lambda+L,a,b) -G({\lambda}+\beta+L,\lambda-\beta+L,a,b)
=4\frac{\beta_2^2 x_1}{x_2}+4\frac{\beta_1^2 (1-x_2)}{1-x_1}\een
from what we see that $G({\lambda}+L,\lambda+L,a,b) \geq G({\lambda}+\beta+L,\lambda-\beta+L,a,b)$ in every case. Observe that the RHS in \eref{eq:dif1} does not depend on $(a,b)$, nor on $(l_0,l_1)$ which may seem strange at the first glance but can be understood even without computing $G$ (see Section \ref{sec:CGR}). Compute now 
\ben\label{eq:dif2}
G({\lambda}+L+\beta,\lambda-\beta+L,a,b) -G(2\lambda+L,L,a,b)
=4(\lambda_2^2-\beta_2^2)\frac{x_1}{x_2}+4(\lambda_1^2-\beta_1^2)\frac{1-x_2}{1-x_1}.
\een
When \eref{eq:pq} holds this is always non-negative. This suffices to see that Proposition \ref{pro:last} holds true when $n=4$, and then Theorem \ref{theo:main} in this case as explain along the paper.

\subsection{Optimization: Case $n=5$}

We prove here Proposition \ref{pro:last} in the case $n=5$, that is $N=3$, which again implies Theorem \ref{theo:main} in this case.

\subsubsection{Minoration}

We compute with a computer algebra system
\ben
D=G({\lambda}+L+\beta,\lambda-\beta+L,a,b) -G(2\lambda+L,L,a,b)\een
and prove its non negativity when $\beta[N+2]\in{\sf Comp}({\lambda[N+2]})$. 
The complexity of the task comes from the fact that the simplest formula we can find for $D$ is huge! Indeed,  $G(\ell^+,\ell^-,a,b)$ is a sum of 8 terms, each of them being a product of 3 linear form in the $\ell^+_j,\ell^-_j,a,b$. Moreover $\ell^+_j$ and $\ell^-_j$ are themselves sum of three terms. After expansion some cancellations occur but the number of remaining terms is still important. There are not obvious way to pack the terms, or to make a proof by recurrence that would permit to optimize $D$ for a given $N$ using the preceding ones. 

Moreover, if we did not make any mistake, $D$ is not non negative for all $\beta$ satisfying only $|\beta_j|\leq \lambda_j$ (for every $j$). It seems that the condition $|q_j|\leq p_j$ (for every $j$) is needed. We will then work in terms of $p_j$'s, $q_j'$s and will prove the positivity of $G$ on 
\[{\sf Compa}^{\star}({\lambda[N+2]})=\{\beta[N+2]:|q_j(\beta)|\leq p_j(\lambda), \beta_0=\beta_{N+1}=0\}.\] 
We then proceed to the change of variables described in \eref{eq:lambdavvp}, and prove the positivity of $D$ by writing $D$ as a polynomial with variables $p_j, (p_j+q_j), (p_j-q_j)$ and non negative coefficients in $\mathbb{Q}(x_1,x_2,x_3)$ when $0\leq x_1\leq x_2\leq x_3\leq 1$ . 
A polynomial that can written in such a form will be said to be in ${\sf Cone}(p,p-q,p+q)$.
An example is $(x_2-x_1)(p_1+q_1)+(x_3-x_1x_2)(p_2-q_2)$.

Taking again $L(x)=l_0+(l_1-l_0)x$ for some $(l_0,l_1)\in[0,+\infty)^2$, and expanding $D$ the result appears to have degree 1 in $l_0$ and in $l_1$ and degree 0 in $a$ and $b$ (explanations are given in Section \ref{sec:CGR}). Hence $D$ can be uniquely written under the form
\[D=4l_0D_0+4l_1D_1+4D_2\]
where $D_1,D_0,D_2$ are polynomial in $x,p,q$. We search to prove that each $D_i\in{\sf Cone}(p,p-q,p+q)$.
We will then prove this by proving that $D_2$ can be written $\sum_{1\leq i,j,k\leq 3} c_{k,i,j} p_k(p_i+q_i)(p_j-q_j)$ with some non negative $c_{k,i,j}$. There are several solutions, one of them satisfies the needed conditions. The solution is as follows: for any $(k,i,j)$, $c_{k,i,j}=c_{k,j,i}$, and
\be
c_{1,1,1}&=&2\,{ {x_1^3 ( 1-x_3 )  ( 1-x_2 )  ( x_3-x_1 ) }/{x_3}}\\
c_{1,1,2}&=&P_0(x_1) (1-x_3)x_1^2/x_3\\
c_{1,1,3}&=&2\,{ {x_1^2x_2 ( 1-x_3 ) ^2 ( x_3-x_1 ) }/{x_3}}\\
c_{1,2,2}&=&2P_1(x_3) \,{\frac { ( 1-x_3 ) x_1}{x_3 ( 1-x_1 ) }}\\
c_{1,2,3}&=&2P_2(x_3)\,{\frac { ( 1-x_3 ) ^2x_1}{x_3 ( 1-x_1 ) }}\\
c_{1,3,3}&=& 2P_3(x_3)\,{\frac { ( 1-x_3 ) ^2x_1}{ ( 1-x_2 )  ( 1-x_1 ) }}
\ee
with $P_0(x_1)\in{\sf Lin}(x_1)$, $P_0(0)=x_2^2+x_2x_3-2x_2^2x_3>0$, $P_0(x_2)=2x_2(1-x_2)(x_3-x_2)>0$.\\
$P_1(x_3),P_2(x_3),P_3(x_3)\in{\sf Lin}(x_3)$ and $P_1(x_2)=x_2 ( 1-x_2 )  ( -x_1x_2-x_1+2\,x_2 )  ( x_2-x_1 ) >0$, $P_1(1)=( 1-x_2 )  (  ( x_1x_2-x_1 ) ^2+ ( 1-x_1 )  ( x_2-x_1 ) x_2 ) >0$,
 $P_2(1)=(x_1x_2-x_2)^2 +(x_1-x_2)^2 >0$, $P_2(x_2)=x_2 ( -x_1x_2-x_1+2\,x_2 )  ( x_2-x_1 ) >0 $, 
 $P_3(1)= x_2 ( 1-x_1 ) ^2 ( 1-x_2 ) >0$, and $P_3(x_2)= ( 1-x_2 )  ( x_1x_2+x_1-2\,x_2
 )  ( x_1-x_2 ) >0$.

\be
c_{2,1,1}&=&{ {x_1^2 ( 1-x_3 ) }{}}P_4(x_1)/x_3\\
c_{2,1,2}&=&2\,{\frac { ( 1-x_2 ) x_1^2 ( 1-x_3 ) }{x_3 ( 1-x_1 ) }} P_5(x_1)\\
c_{2,1,3}&=&2\,{\frac {x_1^2 ( 1-x_3 ) ^2}{x_3 ( 1-x_1 ) }} P_5(x_1)\\
c_{2,2,2}&=& {\frac { ( 1-x_3 ) x_1}{x_3 ( 1-x_1 )  ( x_3-x_1 ) }} P_0(x_1)P_5(x_1)\\
c_{2,2,3}&=&2\,{\frac { ( 1-x_3 ) ^2x_1x_2}{x_3 ( 1-x_1 ) }}P_5(x_1)\\
c_{2,3,3}&=&{\frac { ( 1-x_3 ) ^2x_1}{1-x_1}} P_4(x_1)
\ee
with $P_4(x_1)\in{\sf Lin}(x_1)$, and $P_4(0)=x_2 ( -2\,x_2x_3-x_2+3\,x_3 )>0$, $P_4(x_2)=2\,x_2 ( 1-x_2 )  ( x_3-x_2 ) >0$.
with $P_5(x_1)\in{\sf Lin}(x_1)$, and $P_5(0)=x_2 ( -x_2x_3-x_2+2\,x_3 ) >0$, $P_5(x_2)=2\,x_2 ( 1-x_2 )  ( x_3-x_2 ) >0$.

We need to prove the positivity of the last series $c_{3,i,j}$.

\be
c_{3,1,1}&=&2\,{\frac {x_1^2 ( 1-x_3 ) }{x_2x_3}}P_6(x_1)\\
c_{3,1,2}&=&2\,{\frac {x_1^2 ( 1-x_3 ) }{x_3 ( 1-x_1 ) }} P_7(x_1)\\
c_{3,1,3}&=&2\,{\frac { ( 1-x_3 ) ^2 ( 1-x_2 ) ( x_3-x_1 ) x_1^2}{1-x_1}}\\
c_{3,2,2}&=&2\,{\frac { ( 1-x_3 ) x_1}{x_3 ( 1-x_1 ) }}P_8(x_1)\\
c_{3,2,3}&=&{\frac { ( 1-x_3 ) ^2x_1}{1-x_1}}P_0(x_1)\\
c_{3,3,3}&=&2\,{\frac { ( 1-x_3 ) ^3 ( x_3-x_1 ) x_1x_2}{1-x_1}}
\ee
with $P_6(x_1),P_7(x_1),P_8(x_1)\in{\sf Lin}(x_1)$, and $P_6(0)=x_2{x_3}^2 ( 1-x_2 )  >0$, $P_6(x_2)=x_2 ( -x_2x_3-x_2+2\,x_3 )  ( x_3-x_2 )$, $P_7(0)= ( x_2x_3-x_3 ) ^2+ ( x_2-x_3 ) ^2  >0$, $P_7(x_2)= ( 1-x_2 )  ( -x_2x_3-x_2+2\,x_3 )  ( x_3-x_2 ) >0$, and $P_8(0)=x_2 (  ( x_2x_3-x_2 ) ^2+x_3 ( 1-x_2 )  ( x_3-x_2 )  )$, $P_8(x_2)=x_2 ( 1-x_2 )  ( -x_2x_3-x_2+2\,x_3 )  ( x_3-x_2 ) >0$. Hence, we have established that $D_3>0$.
We now prove with the same method that $D_1\in{\sf Cone}(p+q,p-q)$. One finds that 
$D_1$ can be written $\sum_{1\leq i,j\leq 3} c_{i,j} (p_i+q_i)(p_j-q_j)$ for $c_{1,3}=c_{3,1},c_{2,1}=c_{1,2},c_{3,2}=c_{2,3}$ and 
\be
c_{1,1}&=& 2\,{\frac {x_1^2}{x_2x_3}} P_6(x_1) \\
c_{1,2}&=& 2\,{\frac {x_1^2}{ ( 1-x_1 ) x_3}}P_7(x_1) \\
c_{1,3}&=& 2\,{\frac {x_1^2 ( 1-x_3 )  ( 1-x_2 )  ( x_3-x_1 ) }{1-x_1}}\\
c_{2,2}&=& 2\,{\frac {x_1}{ ( 1-x_1 ) x_3}}P_8(x_1) \\
c_{2,3}&=& {\frac { ( 1-x_3 ) x_1}{1-x_1}}P_0(x_1) \\
c_{3,3}&=& 2\,{\frac {x_1x_2 ( 1-x_3 ) ^2 ( x_3-x_1 ) }{1-x_1}}
\ee
which proves that $D_1>0$. By symmetry $D_0>0$ too.

\subsection{Majoration}
\label{sec:maj}
We compute with a computer algebra system
\[D:=G({\lambda}+L,\lambda+L,a,b) -G({\lambda}+\beta+L,\lambda-\beta+L,a,b)\]
we find that 
\[D=l_0 f_1+l_1 f_2 +f_3\]
where the terms $f_1$, $f_2$, $f_3$ are polynomial, linear in $\lambda$, quadratic in $\beta$ (and do no depends on $l_0$, $l_1$ nor on $(a,b)$). 
Here the formula are small enough to be written down. 
One finds:
\be
f_1/8&=&(\,\beta_1\beta_2(x_3-1)+\,\beta_1x_2\beta_3+\,
\beta_2\beta_3x_1)-\,{\frac {
\beta_3 ( \beta_3x_1x_2+\beta_1x_2+\beta_
2x_1-x_2\beta_3 ) }{x_3}}\\
&+&\,{\frac {{\beta_1}^2 ( 1-x_3 )  ( 1-x_2 ) }{1-x_1}}+\,{\frac {{\beta_2}^2 ( 1-x_3 )  ( 1-x_1 ) }{1-x_2}},\ee
and $f_2$ can be obtained from $f_1$ by a change of variables (by symmetry); $f_3$ appears to be
\be
f_3/4&=&\beta_1 (\beta_1\lambda_3+2\,\beta_3\lambda_1  -\beta_1\lambda_1) +2\,{\frac {x_1{\beta_2}^2 ( \lambda_2x_3-\lambda_2+\lambda_3 ) }{x_2}}+{\frac { ( \beta_1-\beta_3 ) ^2 ( \lambda_1-\lambda_3 )  ( x_1-x_2 ) }{x_1-x_3}}\\
&-&{\frac {\beta_3 ( 2\,\beta_1\lambda_3x_2+2\,\beta_2\lambda_3x_1-\beta_3\lambda_1x_2-\beta_3\lambda_2x_1
+\beta_3\lambda_3(x_1-x_2 )) }{x_3}}+2\,{\frac {{\beta_2}^2 ( 1-x_3 )  ( \lambda_1-\lambda_2x_1) }{1-x_2}}\\
&+&{\frac {\beta_1 ( \beta_1(\lambda_1x_2-\lambda_1x_3+\lambda_2x_3+\lambda_3x_2-\lambda_2-\lambda_3)+2\,\beta_2\lambda_1(1-x_3)  +2\,\beta_3\lambda_1(1-x_2) ) }{x_1-1}}.
\ee
Again as explained in Section \ref{sec:CGR}, $a$ and $b$ play not role at all. 
Now, $f_1$ and $f_3$ are quadratic forms in the $\beta_i$'s. Again, $D$ seems not positive on the set of $\beta[5]$ satisfying only $|\beta_j|\leq \lambda_j$. We work again on ${\sf Compa}^\star$ after making the change of variables \eref{eq:lambdavvp}.
 
\subsubsection{Positivity of $f_1$}
We then find that the quadratic form $f_1$ written in terms of $q$ can be written
\[f_1=(q_1,q_2,q_3) M (q_1,q_2,q_3)^{t}\]
where $M= 4\frac{(1-x_3)(x_3-x_1)x_1}{x_3} N$ and $N$ is the symmetric matrix with coefficients:
\[N=\left[ \begin {array}{ccc} 2\,x_1 ( 1-x_2 ) &{\frac {P_0(x_1)}{x_3-x_1}}&2\, ( 1-x_3 ) x_2\\ \noalign{\medskip}{\frac {P_0(x_1)}{x_3-x_1}}&2\,{\frac {P_1(x_3)}{ ( x_1-x_3 ) x_1 ( x_1-1 ) }}&2\,{\frac { ( 1-x_3 ) P_2(x_3)}{ ( x_3-x_1 ) x_1 ( 1-x_1 ) }}
\\ \noalign{\medskip}2\, ( 1-x_3 ) x_2&2\,{\frac { ( 1-x_3 ) P_2(x_3)}{ ( x_3-x_1 ) x_1 ( 1-x_1 ) }}&2\,{\frac {P_3(x_3)\, ( 1-x_3 ) x_3}{ ( x_3-x_1 ) x_1 ( x_
2-1 )  ( x_1-1 ) }}\end {array} \right]
\]
 
For any matrix $M$ denote by $M[k]$ the extracted matrix $(M_{i,j})_{1\leq i,j\leq k}$.
To prove the non negativity of $f_1$ it suffices to prove the positivity of $\det(M[k])$.

We find $\det(N[1])=2(1-x_2)x_1>0$, 
\[\det(N[2])\frac{(x_3-x_1)^2}{(x_1-x_2)^2}(1-x_1)=g(x_1)\in {\sf Lin}(x_1)\]
with $g(0)=( -2\,x_2x_3+x_2+x_3 )  ( -2\,x_2x_3-x_2+3\,x_3 ) >0$ and $g(x_2)= ( 1-x_2 )  ( -4\,x_2x_3+x_2+3\,x_3 )  ( x_3-x_2 ) >0$. 
Last
\[\det(N[3])\frac{(x_3-x_1)^3(1-x_2)(1-x_1)x_1}{x_3(1-x_3)(x_2-x_1)^2(x_3-x_2)^2}  
= g(x_3)\in {\sf Lin}(x_3)
\]
with  $g(x_2)=2\, ( 1-x_2 )  ( 3\,x_1x_2-x_1-2\,x_2 )  ( x_1-x_2 )>0$, $g(1)=6\,x_2 ( 1-x_1) ^2 ( 1-x_2 ) >0$, we deduce the fact that $\det(N[3])>0$. Hence $M$ is definite positive, and then $f_1$ is positive except when $(q_1,q_2,q_3)=0$ in which case it is 0.

\subsubsection{Positivity of $f_3$}

Since $f_3$ is linear in $p$ and quadratic in $q$, we write
\[f_3=\sum_{i=1}^3 p_i (q_1,q_2,q_3) M^{(i)}  (q_1,q_2,q_3)^{(t)}.\]
The $p_i$'s being non negative, is suffices to prove the positivity of $\det(M^{(i)}[j])$ for $i\in\{1,2,3\}$, $j\in\{1,2,3\}$. Since $p_1$ and $p_3$ plays the same role (by symmetry with respect to the line $x=1/2$, only the cases $i\in\{1,2\}$ have to be treated.

We have for $N:=M^{(1)}\frac{x_3}{4(1-x_3)^2}$, 
\[N=\left[ \begin {array}{ccc} 2\,{\frac {{x_1}^3 ( x_3-x_1 )  ( 1-x_2 ) }{1-x_3}}&{\frac 
{P_0(x_1)\,{x_1}^2}{1-x_3}}&2\,{x_1}^2x_2 ( x_3-x_1 ) \\ \noalign{\medskip}{\frac {P_0(x_1)\,{x_1}^2}{1-x_3}}&2\,{\frac {P_1(x_3)\,x_1}{ ( 1-x_3 )  ( 1-x_1 ) }}&2\,{\frac {P_2(x_3)\,x_1}{1-x_1}}\\ \noalign{\medskip}2\,{x_1}^2x_2 ( x_3-x_1 ) &2\,{\frac {P_2(x_3)\,x_1}{1-x_1}}&2\,{\frac {P_3(x_3)\,x_1x_3}{ ( 1-x_2 )  ( 1-x_1 ) }}\end {array} \right]\]
Hence,
\[\det(N[1])=-2\,{\frac {{x_{{1}}}^{3} \left( x_{{2}}-1 \right)  \left( x_{{1}}-x_{
{3}} \right) }{x_{{3}}-1}}>0\]
\[\det(N[2])={\frac { \left( x_{{1}}-x_{{2}} \right) ^{2}}{ \left( x_{{3}}-x_{{1}}
 \right) ^{2} \left( 1-x_{{1}} \right) }} g(x_1)\in {\sf Lin}(x_1)\]
  with
$g(x_2)= ( 1-x_2 )  ( 4\,x_2x_3-x_2-3\,x_{3
} )  ( x_2-x_3 )>0$, $g(0)= ( 2\,x_2x_3-x_2-x_3 )  ( 2\,x_2x_
3+x_2-3\,x_3 ) >0$.
Finally,
\[\det(N[3])=2\,{\frac { \left( x_{{1}}-x_{{2}} \right) ^{2} \left( x_{{2}}-x_{{3}}
 \right) ^{2}x_{{3}} \left( x_{{3}}-1 \right) }{ \left( x_{{1}}-x_{{3}
} \right) ^{3}x_{{1}} \left( x_{{1}}-1 \right)  \left( x_{{2}}-1
 \right) }}
 g(x_3)\in {\sf Lin}(x_3)\]
with
\[g(x_2)= ( 1-x_2 )  ( -3\,x_1x_2+x_1+2\,x_2 )  ( x_2-x_1 )>0 ,
g(1)=3\,x_2 ( 1-x_1 ) ^2 ( 1-x_2 ) >0\]

Remains the matrix 

\[M^{(2)}= 4\left[ \begin {array}{ccc} {\frac {P_4(x_1)\,{x_1}^2 ( 1-x_3 ) }{x_3}}&2\,{\frac {{x_1}^2 ( 1-x_3 )  ( 1-x_2 ) P_5(x_1)}{ ( 1-x_1 ) x_3}}&2\,{\frac {{x_1}^2 ( 1-x_3 ) 
^2P_5(x_1)}{ ( 1-x_1 ) x_3}}\\ \noalign{\medskip}2\,{\frac {{x_1}^2 ( 1-x_3 )  ( 1-x_2 ) P_5(x_1)}{ ( 1-x_1 ) x_3}}&{\frac { ( 1-x_3 ) x_1P_0(x_1)\,P_5(x_1)}{x_3 ( 1-x_1 )  ( x_3-x_1 ) }}&2\,{\frac {x_1
 ( 1-x_3 ) ^2x_2P_5(x_1)}{ ( 1-x_1 ) x_3}}\\ \noalign{\medskip}2\,{\frac {{x_1}^2 ( 1-x_3 ) ^2P_5(x_1)}{ ( 1-x_1 ) x_3}}&2\,{\frac {x_1 ( 1-x_3 ) ^2x_2P_5(x_1)}{ ( 1-x_1 ) x_3}}&{\frac {P_4(x_1)\, ( 1-x_3 ) ^2x_1}{1-x_1}}\end {array} \right] \]

Along almost the same lines, $\det(M^{(2)}[1])=4{\frac {x_1^2 ( 1-x_3 ) }{x_3}}P_4(x_1)>0$,
 
\[\det(M^{(2)}[2])= 16 {\frac { ( 1-x_3 ) ^2x_1^3 ( x_2-x_1 ) ^2}{{x_3}^2 ( x_3-x_1 ) ( 1- x_1 ) ^2}} g_1(x_1)P_5(x_1)\]
with $g_1(x_1)\in {\sf Lin}(x_1)$ and 
\[g_1(x_2)= ( 1-x_2 )  ( -4\,x_2x_3+x_2+3\,x_3 )  ( x_3-x_2 ) >0\]
\[g_1(0)=( -2\,x_2x_3+x_2+x_3 )  ( -2\,x_2x_3-x_2+3\,x_3 ) >0\]
\[\det(M^{(2)}[3])=3\times 64 \,{\frac { ( x_2-x_1 ) ^2 ( x_2-x_{3
} ) ^2{x_1}^{4} ( x_3-1 ) ^{4}}{{x_3}^{
2} ( 1-x_1 ) ^2 ( x_3-x_1 ) }} P_4(x_1)P_5(x_1)>0.\]

\section{Algebraic considerations}
\label{sec:CGR}

\subsection{Elements on the function $K$}
\label{seq:bin-struct}
Recall Prop. \ref{pro:comb-dec} and \eref{eq:K-toPC}. Formula \eref{eq:rec} implies that $K$ possesses a binary tree structure: choose uniformly a pivot $j$, splits the points in two parts (those with indices $<j$, those with indices $>j$) and iterate in both parts. In Fig. \ref{fig:comb2}, see the first decomposition done according to $j=2$. The triangle with vertices $v_0,v_j,v_{m+1}$ with $v_k=x_k+i \ell_k$ plays an important role.  When one expands the computation in each of the subtrees, one sees that at the second step\\
-- in the part at the left of $v_j$, $v_0$ is still at the left, but now $v_j$ plays now the role of the right boundary (as did before $v_{m+1}$),\\
-- in the part at the right of $v_j$, $v_{m+1}$ is still at the right, but now $v_j$ plays now the role of the left boundary (as did before $v_{0}$). \par 
Let $z[m+2]$ be a sequence of different points in the plane where $z_j=(x_j,\gamma_j)$, 
for some $x[m+2]\in\ND{m+2}, x_0=0, x_{m+1}=1$. Let us call {\it directed triangle } 3-tuples of the form $t=(z_{j_1},z_{j_2},z_{j_3})$ with $0\leq j_1<j_2<j_3\leq m+1$. We call {\it triangulation } of $z[m+2]$ a set $T$ of triangles satisfying the following conditions: \\
$\bullet$ $\#T=m$ (this is the number of different triangles in $T$),\\
$\bullet$ the triangles are non crossing : if $t=(z_{j_1},z_{j_2},z_{j_3})$ and $t'=(z_{j_1'},z_{j_2'},z_{j_3'})$ are two different triangles in $T$ then either:\\
-- $\{j_1,j_2,j_3\}$ is included in one of the intervals $\cro{0,j'_1}$, $\cro{j'_1,j'_2}$, $\cro{j'_2,j'_3}$, $\cro{j'_3,m+1}$\\
-- or $\{j_1',j_2',j_3'\}$ is included in one of the intervals $\cro{0,j_1}$, $\cro{j_1,j_2}$, $\cro{j_2,j_3}$, $\cro{j_3,m+1}$.\par
\begin{figure}[ht]
\centerline{\includegraphics[width=11cm]{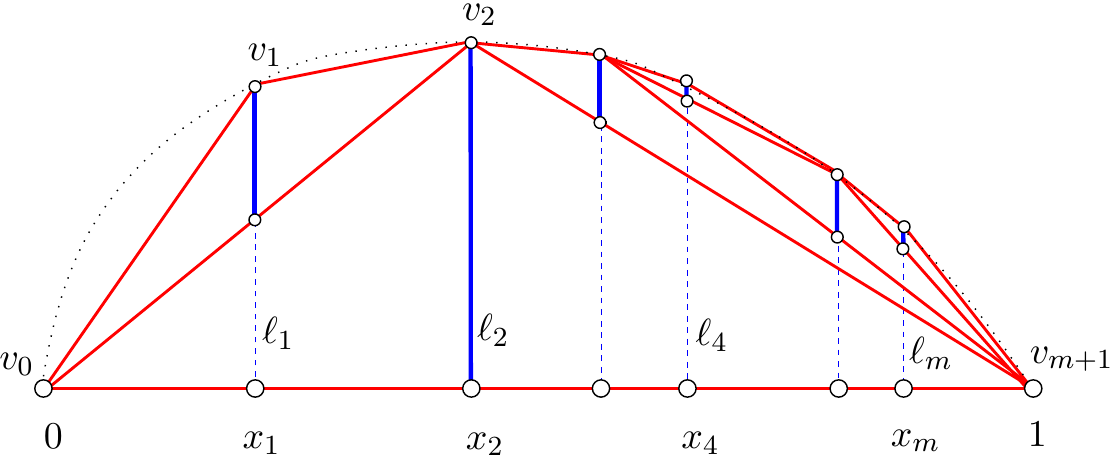}}
\captionn{\label{fig:comb3} Illustration of a triangulation $t$.} 
\end{figure}
\noindent Denote by $\Tri(z[m+2])$ the set of triangulations of $z[m+2]$. When $T\in\Tri(z[m+2])$, the set of central points $\{ z_{j_2} \,|\, (z_{j_1},z_{j_2},z_{j_3}) \in T\}$ equals $\cro{1,m}$. Let $t=(z_{n_1},z_{n_2},z_{n_3})$ be a triangle. Let
\ben\label{eq:qt}
q_t(x[m+1],\gamma[m+1])&=&\l[{\gamma_{n_2}- \l(\gamma_{n_1}+\frac{x_{n_2}-x_{n_1}}{x_{n_3}-x_{n_1}}(\gamma_{n_3}-\gamma_{n_1})\r)}\r]/(n_3-n_1-1)
\een
be the length of the vertical segment from $z_{n_2}$ to the segment $[z_{n_1},z_{n_3}]$ divided by $n_3-n_1-1$. Set 
\ben
{K}^\star(x[m+2],\gamma[m+2])=\sum_{T\in \Tri(z[m+2])} \prod_{t\in T}  q_t(x[m+2],\gamma[m+2]).
\een
We have the following combinatorico-geometrical representation of $K$~:
\begin{theo}\label{theo:repq}
 For any $x[m+2]\in \ND{m+2}$, $x_0=0,x_{m+1}=1$, any $\ell[m+2]\in[0,+\infty)^{m+2}$ such that $\ell_0=\ell_{m+1}=0$, we have
\ben\label{eq:dqsd}
K[x_j,\ell_j]_{1\leq j \leq m}={K}^\star(x[m+2],\ell[m+2]).
\een
\end{theo}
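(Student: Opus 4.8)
\textbf{Plan of proof of Theorem \ref{theo:repq}.}
The plan is to prove \eref{eq:dqsd} by induction on $m$, matching the recursive structure of $K$ given by \eref{eq:rec} against a recursive description of the triangulation sum $K^\star$. The base cases $m=0$ and $m=1$ are immediate: $\Tri$ of a $2$-point sequence is empty (so $K^\star=1$), and for $m=1$ the unique triangulation is $\{(z_0,z_1,z_2)\}$, whose single triangle contributes $q_t=\ell_1$ (since $\gamma_0=\gamma_2=0$, $x_0=0$, $x_2=1$ and $n_3-n_1-1=1$), matching $K[x_1,\ell_1]=\ell_1$.

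For the inductive step I would first establish the combinatorial fact that any triangulation $T\in\Tri(z[m+2])$ has a \emph{unique} ``outer'' triangle of the form $t_0=(z_0,z_j,z_{m+1})$ for some $j\in\cro{1,m}$ — this is forced because the central points of $T$ exhaust $\cro{1,m}$ and the non-crossing condition shows exactly one triangle must use both extreme indices $0$ and $m+1$. Removing $t_0$ splits the remaining $m-1$ triangles into those whose index sets lie in $\cro{0,j}$ and those in $\cro{j,m+1}$, and these two families are, respectively, triangulations of $z_0,\dots,z_j$ and of $z_j,\dots,z_{m+1}$ — but where the relevant vertical deviations are measured against the \emph{new} boundary segments $[z_0,z_j]$ and $[z_j,z_{m+1}]$, not against $[z_0,z_{m+1}]$. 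This is precisely the bookkeeping already flagged in Section \ref{seq:bin-struct}: after choosing pivot $j$, on the left part $v_j$ takes over the role of the right boundary and on the right part it takes over the role of the left boundary. Concretely, I would check that if one applies the affine (verticality-preserving) maps sending $[z_0,z_j]$, resp. $[z_j,z_{m+1}]$, to the horizontal axis — exactly the maps producing the arguments $\frac{x_k}{x_j}$, $\ell_k-\frac{x_k}{x_j}\ell_j$ on the left and $\frac{x_k-x_j}{1-x_j}$, $\ell_k-\frac{1-x_k}{1-x_j}\ell_j$ on the right that appear in \eref{eq:rec} — then the product $\prod_{t\in T,\,t\neq t_0} q_t$ factors as $K^\star$ of the left configuration times $K^\star$ of the right configuration, using that the $q_t$ for a triangle not touching the indices $0$ or $m+1$ depends only on the relative positions of its three vertices and is invariant under these affine normalizations (this is where the division by $n_3-n_1-1$ in \eref{eq:qt} must be tracked carefully, since re-indexing inside a sub-block does not change the index gaps). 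Finally, $q_{t_0}=\ell_j$ because $t_0$ spans the whole interval so $n_3-n_1-1=m$... — here I must be careful: in fact summing over which triangle is outer does not come with the $1/m$ of \eref{eq:rec}, so I should instead use the cleaner recurrence for $K$ itself rather than for $\langle\Co\cdots\rangle$; with $K^\star$, the sum over the choice of $j$ for the outer triangle together with $q_{t_0}=\ell_j$ (valid because for $t_0$ the segment length to $[z_0,z_{m+1}]$ equals $\gamma_j=\ell_j$, divided by $m+1-0-1=m$, but this $1/m$ is exactly cancelled once one recognizes the outer triangle is the \emph{first split} so there is no averaging) reproduces $\frac1m\sum_j \ell_j\cdot(\cdots)$ only if one re-derives the recurrence symmetrically; I would therefore prove the identity at the level of $\langle\Co\cdots\rangle$ directly using \eref{eq:qdsd}, where the $1/m$ arises naturally from choosing $J=\argmax W_j$.

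By the induction hypothesis applied to the two strictly smaller sub-configurations, $K^\star$ of each equals the corresponding $K[\cdots]$, and summing $\frac1m\sum_{j=1}^m \ell_j \cdot K[\text{left}]\cdot K[\text{right}]$ reproduces exactly \eref{eq:rec}, completing the induction. The main obstacle I anticipate is the precise verification that the deviation lengths $q_t$ transform correctly under the two normalizing affine maps and that the $1/(n_3-n_1-1)$ normalization in \eref{eq:qt} is consistent with re-indexing inside the sub-blocks — in particular that the overall combinatorial weighting assembles into the averaged sum $\frac1m\sum_j$ rather than a plain sum; getting the pivot/outer-triangle dictionary exactly right, including whether the $1/m$ factor appears at this level or is absorbed, is the delicate point and is best handled by matching directly to the probabilistic derivation of \eref{eq:qdsd} via the maximum $W^\star$.
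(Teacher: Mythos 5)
Your plan is correct and is essentially the paper's own proof, which consists of the single instruction ``Expand \eref{eq:rec}'': the induction on $m$ via the unique outer triangle $(z_0,z_j,z_{m+1})$, the splitting into left and right sub-blocks, and the invariance of the numerator of $q_t$ under the verticality-preserving normalizing maps is exactly that expansion. The only point to settle is the $1/m$ bookkeeping you hesitate over, and it resolves itself at once: since $\ell_0=\ell_{m+1}=0$ and $n_3-n_1-1=m$ for the outer triangle, \eref{eq:qt} gives $q_{t_0}=\ell_j/m$, which is precisely the factor $\frac1m\,\ell_j$ in \eref{eq:rec}, so no cancellation and no detour through \eref{eq:qdsd} is needed.
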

\begin{proof} Expand \eref{eq:rec}. \end{proof}

\begin{lem} If $L_i=l_0+x_i(l_{1}-l_0)$ is affine (in this sense), for any sequence $\gamma[m+2]$,
\ben\label{eq:dqsd2}
K^\star(x[m+2],\gamma[m+2]+L[m+2]) =K^\star(x[m+2],\gamma[m+2]).
\een
\end{lem}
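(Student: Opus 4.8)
The plan is to read the identity straight off the combinatorico-geometric representation of $K$ furnished by Theorem~\ref{theo:repq} and the definition of $K^\star$, the whole argument reducing to one elementary observation about the building blocks $q_t$. The first point I would stress is that the index set $\Tri(z[m+2])$ over which $K^\star$ sums is a purely combinatorial object: a triangulation is a collection of $m$ triples $0\le j_1<j_2<j_3\le m+1$ constrained only by the cardinality and the non-crossing conditions, and neither condition ever refers to the ordinates $\gamma_j$. Hence replacing $\gamma[m+2]$ by $\gamma[m+2]+L[m+2]$ does not alter $\Tri(z[m+2])$, and \eref{eq:dqsd2} will follow once I know that each factor $q_t$ is unchanged under this shift.

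Next I would verify that invariance. Fix a directed triangle $t=(z_{n_1},z_{n_2},z_{n_3})$ (so $n_3-n_1-1\ge 1$, no division issue). By \eref{eq:qt}, $q_t$ equals $(n_3-n_1-1)^{-1}$ times the linear functional
\[
\Lambda(\gamma):=\gamma_{n_2}-\gamma_{n_1}-\frac{x_{n_2}-x_{n_1}}{x_{n_3}-x_{n_1}}\,(\gamma_{n_3}-\gamma_{n_1})
\]
evaluated at $\gamma$. Thus $q_t(x[m+2],\gamma[m+2]+L[m+2])-q_t(x[m+2],\gamma[m+2])=(n_3-n_1-1)^{-1}\Lambda(L)$, and since $L_i=l_0+x_i(l_1-l_0)$ the constant part $l_0$ cancels in $\Lambda$ while the linear part contributes
\[
(l_1-l_0)\Bigl[(x_{n_2}-x_{n_1})-\frac{x_{n_2}-x_{n_1}}{x_{n_3}-x_{n_1}}(x_{n_3}-x_{n_1})\Bigr]=0 .
\]
So $\Lambda(L)=0$ and $q_t$ is shift-invariant. (Geometrically this just says that the vertical gap between $z_{n_2}$ and the chord $[z_{n_1},z_{n_3}]$ is preserved by the shear $(x,y)\mapsto(x,y+l_0+(l_1-l_0)x)$, which sends chords to chords and keeps vertical lengths fixed — the same principle already invoked for the maps in ${\sf PV}$.)

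Finally I would assemble the pieces: for each fixed $T\in\Tri(z[m+2])$ the product $\prod_{t\in T}q_t$ is invariant under $\gamma[m+2]\mapsto\gamma[m+2]+L[m+2]$ by the previous step, and the range of summation is identical in both cases by the first step, whence $K^\star(x[m+2],\gamma[m+2]+L[m+2])=K^\star(x[m+2],\gamma[m+2])$. I do not anticipate any real obstacle; the only thing needing a line of computation is the cancellation $\Lambda(L)=0$, i.e.\ the fact that the divided-difference combination occurring in $q_t$ annihilates affine functions of the abscissas, which is the displayed identity above.
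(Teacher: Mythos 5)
Your proof is correct and is exactly the argument the paper intends (the paper leaves the lemma as an immediate consequence of Theorem \ref{theo:repq}, with the key point — that $q_t$ is linear in $\gamma$ and annihilates affine sequences — spelled out in Remark \ref{rem:kkstar}). Nothing to add.
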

\begin{rem}\label{rem:kkstar} What is the difference between $K$ and $K^\star$~? By definition, $K[x_j,\ell_j]_{j \in A}$ depends on 4 extra quantities that are $x_0=0,x_{N+1}=1,\ell_0=0,\ell_{N+1}=0$. For $K^{\star}(x[m+2],\ell[m+2])$ these conditions are not assumed,  
and $K^{\star}$ is defined even when $\ell_0$ or $\ell_{N+1}$ are not 0. Hence, when $\ell$ is linear in the sense $\ell_j=a+x_j(b-a)$, we do have $K^\star(x[m+2],\ell[m+2])=0$ but $K[x_j,\ell_j]_{1\leq j \leq m}$ is not 0 (in general), since the computation of this quantity is done assuming $\ell_0=0,\ell_{m+1}=0$. What it still true is that $q_t(\ell)$ is 0 when the triangle $t$ does not contain $0$ or $m+1$. In the summation giving $K$, when $\ell$ is linear only remains the contributions of the triangulations in which each triangle is incident to 0 or $m+1$.
\end{rem}

\subsection{Alternative representation of $K[x_j,\ell_j]_{1\leq j \leq m}$}
\label{seq:AR}
Instead of taking the sum on all subtrees, we can make a summation on all permutations. For any $\sigma$ permutation in ${\cal S}\cro{1,m}$, set
\be
R_j(\sigma) & = & \min\{ \sigma_i~: i <j,\sigma_i>\sigma_j \} \vee 0,\\
L_j(\sigma) & = & \max\{ \sigma_i~: i <j,\sigma_i<\sigma_j \} \wedge (m+1). 
\ee 
Then one can prove 
\begin{lem}\label{eq:Ks} We have 
\ben
{K}^\star(x[m+2],\gamma[m+2])&=&\frac1{m!}\sum_{\sigma \in {\cal S}(\cro{1,m})} \prod_{j=1}^m 
\w{q}_{\sigma,i}(x[m+2],\gamma[m+2])
\een
where
\ben
\w{q}_{\sigma,i}(x[m+2],\gamma[m+2]):=
\l( \frac{\gamma_{\sigma_j}-\gamma_{L_j(\sigma)}}{x_{\sigma_j}-x_{L_j(\sigma)}}- \frac{\gamma_{R_j(\sigma)}-\gamma_{\sigma_j}}{x_{R_j(\sigma)}-x_{\sigma_j}} \r) \frac{(x_{\sigma_j}-x_{L_j(\sigma)})(x_{R_j(\sigma)}-x_{\sigma_j})}{x_{R_j(\sigma)}-x_{L_j(\sigma)}},
\een
and $x_0=0,x_{m+1}=1$. 
\end{lem}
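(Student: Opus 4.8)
The plan is to reinterpret the triangulation sum defining $K^\star$ as a sum over binary trees, and then to pass from binary trees to permutations via the standard binary‑search‑tree (BST) insertion map, the price being a tree hook‑length multiplicity.

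First I would record an elementary algebraic identity. For a directed triangle $t=(z_{n_1},z_{n_2},z_{n_3})$ put $\delta_t(\gamma):=\gamma_{n_2}-\bigl(\gamma_{n_1}+\tfrac{x_{n_2}-x_{n_1}}{x_{n_3}-x_{n_1}}(\gamma_{n_3}-\gamma_{n_1})\bigr)$ for the unnormalized vertical gap, so that by \eref{eq:qt} one has $q_t=\delta_t/(n_3-n_1-1)$. Clearing denominators gives, for any $L<\sigma_j<R$,
\[
\Bigl(\tfrac{\gamma_{\sigma_j}-\gamma_{L}}{x_{\sigma_j}-x_{L}}-\tfrac{\gamma_{R}-\gamma_{\sigma_j}}{x_{R}-x_{\sigma_j}}\Bigr)\tfrac{(x_{\sigma_j}-x_{L})(x_{R}-x_{\sigma_j})}{x_{R}-x_{L}}=\gamma_{\sigma_j}-\Bigl(\gamma_{L}+\tfrac{x_{\sigma_j}-x_{L}}{x_{R}-x_{L}}(\gamma_{R}-\gamma_{L})\Bigr),
\]
so $\widetilde q_{\sigma,j}$ is precisely $\delta_{t_j}(\gamma)$ for the triangle $t_j=(z_{L_j(\sigma)},z_{\sigma_j},z_{R_j(\sigma)})$ (with the natural conventions $L_j=0$, $R_j=m+1$ when the relevant sets are empty).

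Next I would make the combinatorics explicit. Expanding \eref{eq:rec} (exactly as in the proof of Theorem \ref{theo:repq}, and as discussed in Section \ref{seq:bin-struct}), the triangulations in $\Tri(z[m+2])$ are in bijection with binary trees on the vertex set $\cro{1,m}$ carrying the in-order (search-tree) labelling: the unique triangle incident to both $z_0$ and $z_{m+1}$ has central vertex the root $r$, and one recurses on $\cro{1,r-1}$ and $\cro{r+1,m}$. Under this bijection, if $v$ is the central vertex of a triangle $t$, then its two outer vertices are the \emph{gap endpoints} of $v$ (the largest proper ancestor of $v$ lying to its left, or $0$, and the smallest proper ancestor lying to its right, or $m+1$), and $n_3-n_1-1$ equals $s_v$, the size of the subtree rooted at $v$. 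Hence $K^\star(x,\gamma)=\sum_{T}\prod_{v}\delta_{t_v}(\gamma)/s_v$, the outer sum running over binary trees $T$ on $\cro{1,m}$. Now bring in permutations: to $\sigma\in{\cal S}(\cro{1,m})$ associate the BST $T(\sigma)$ obtained by inserting $\sigma_1,\dots,\sigma_m$ in this order. Two classical facts then close the argument: (i) at the moment $\sigma_j$ is inserted, the already-present keys bracketing it (including the virtual endpoints $0$, $m+1$) are precisely $L_j(\sigma)$ and $R_j(\sigma)$, and these coincide with the gap endpoints of $\sigma_j$ in the final tree $T(\sigma)$; (ii) the number of $\sigma$ with $T(\sigma)=T$ equals the number of linear extensions of $T$ ordered so that each vertex precedes its descendants, i.e. the tree hook-length count $m!/\prod_v s_v$. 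By (i) and the identity above, $\prod_{j=1}^m\widetilde q_{\sigma,j}=\prod_{v}\delta_{t_v}(\gamma)$ depends on $\sigma$ only through $T(\sigma)$; summing over $\sigma$, grouping by $T(\sigma)$ and using (ii),
\[
\frac1{m!}\sum_{\sigma}\prod_{j=1}^m\widetilde q_{\sigma,j}=\frac1{m!}\sum_{T}\frac{m!}{\prod_v s_v}\,\prod_{v}\delta_{t_v}(\gamma)=\sum_{T}\prod_{v}\frac{\delta_{t_v}(\gamma)}{s_v}=K^\star(x,\gamma).
\]

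The main obstacle is not depth but precision: the load-bearing step is the careful verification of point (i) — that the purely permutation-defined quantities $L_j(\sigma)$, $R_j(\sigma)$ really are the final-tree gap endpoints of $\sigma_j$ — and of point (ii), the tree hook-length enumeration of the fibres of $\sigma\mapsto T(\sigma)$; both are standard but must be phrased so that they compose cleanly with the triangulation–to–binary-tree dictionary inherited from \eref{eq:rec}. Everything else (the bijection, the algebraic identity, the identification $n_3-n_1-1=s_v$) is routine.
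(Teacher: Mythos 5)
Your proof is correct, and it takes a genuinely different route from the paper's. The paper proves the lemma by going back to the probabilistic origin of $K$: instead of peeling the comb recursively into the two sub-combs on either side of the pivot (as in Prop.~\ref{pro:comb-dec}), it extracts the global maximum of the remaining $W_j$'s one at a time; the extraction order is then a uniform permutation of $\cro{1,m}$, the factor contributed at step $j$ is exactly $\w{q}_{\sigma,j}$ (the bracketing elements among those already extracted being $L_j(\sigma)$ and $R_j(\sigma)$), and the prefactor $1/m!$ is just the probability of a given extraction order --- the subtree-size denominators $1/(n_3-n_1-1)$ never appear in that argument. You instead stay at the algebraic level of Theorem~\ref{theo:repq}: you identify triangulations with binary search trees, verify the (correct) identity $\w{q}_{\sigma,j}=\delta_{t_j}(\gamma)=(n_3-n_1-1)\,q_{t_j}$, and convert the tree sum into a permutation average via the hook-length count $\#\{\sigma:T(\sigma)=T\}=m!/\prod_v s_v$ together with the standard BST fact that $L_j(\sigma)$ and $R_j(\sigma)$ are the gap ancestors of $\sigma_j$ in $T(\sigma)$. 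Your route yields a complete, purely combinatorial verification valid for arbitrary $\gamma[m+2]$ (the probabilistic sketch a priori covers only $\gamma_0=\gamma_{m+1}=0$, $\gamma_j\ge 0$, and needs the affine invariance of both sides, or a polynomial-identity argument, to extend), and it makes explicit why the two normalizations $1/m!$ and $1/\prod_v s_v$ agree; the paper's route buys the absence of any hook-length lemma, the uniformity of the extraction order doing that bookkeeping for free. One housekeeping remark: you have silently corrected the paper's conventions --- in the displayed definitions of $R_j(\sigma)$ and $L_j(\sigma)$ the defaults $\vee 0$ and $\wedge(m+1)$ are swapped (it is $R_j$ that must default to $m+1$ and $L_j$ to $0$), which is the reading your argument uses and the only one under which the statement holds.
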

\begin{proof} The proof is more or less the same as that appearing above, in Section \ref{seq:bin-struct}. In this section, we used the fact that $K$ owns a binary structure, when one observes separately the two parts lying apart of the pivot (dissection property of $K$, coming from Prop. \ref{pro:comb-dec}). But instead of considering ``independently'' what happens in this two parts, we can make the iteration on the collection of remaining segments (at time $k$, $k$ segments have been ``taken''). Conditional on the set of indices $A$ of these segments, the remaining points $U_j$ are uniform in these segments which have the form $x_j+i[y_j+\ell_j'w_j]$ for some $(y_j,\ell_j')$. We still make a decomposition taking the maximum of the $w_j$ (which is uniform in the set of remaining indices), and iterate. 
Putting all pieces together, we can see that Lemma \ref{eq:Ks} holds.
\end{proof}
\subsection{Simplifications in the computations of $G$}

The representation of $K$ given in Theorem \ref{theo:repq} allows one to see that $K[x_j,\ell_j]_{1\leq j \leq m}$ is a sum of products of quantities, the $q_t(\ell)$'s, linear in $\ell$. As already said in Remark \ref{rem:kkstar} for every triangle $t=(n_1,n_2,n_3)$ with $n_1>0,n_3<m+1$, $q_t(\ell)=0$ when $\ell$ is linear (except at the border). In the computation we are doing, we need to compute quantities of the form 
 $F= G(\lambda+L,\lambda+L,a,b)-G(\lambda+\beta+L,\lambda-\beta+L,a,b)$ which involves quantities of the form $K[x_j,L_j+u_j+\lambda_j+\beta_j]_{j\in A}$. Hence, the $q_t$ involved are of the form $q_t(L+u+\lambda+\beta)=q_t(L)+q_t(u)+q_t(\lambda)+q_t(\beta)$ and $L(x)=l_0+x(l_1-l_0)$. When one expands everything, since $u$ and $L$ are linear, the only contributions of $a$ and $b$ come from the triangles that are incident to the points 0 and $m+1$. The resulting big sum, is then a sum over the triangulations, of the products over each triangulations of the triangle associated values $q_t(\ell)$, with in each triangle a sum over $\ell=\lambda$, $\pm \beta$ (depending on the case), $u$ and $L$ (only when $t$ is adjacent to 0 or to $m+1$). Moreover, our decomposition formula shows that the degree of $F$ in $l_0,l_1,a,b,\lambda,\beta$ is $n$ (where the degree of a term $\prod v_i^{k_i}$ in these variables is $\sum k_i$). 

Besides $F$ is even in $\beta$. Hence when one expands everything, in terms of $q_t$, remains only a sum of product of $q_t$'s in which all terms cancel except those involving a positive even number of $q_T(\beta)$, some $q_T(L)$ incident to the boundary (by Remark \ref{rem:kkstar}), an even number of $q_T(u)$, incident to the boundary (by Remark \ref{rem:kkstar}); since $q_t(u)$ itself is linear in $a,b$.  Developing everything, since we sum on $(\varepsilon a,\varepsilon'b)$ for $(\varepsilon,\varepsilon')\in\{-1,1\}^2$, remains only the terms even in $a$ and in $b$. \\
Notice also that the terms involving no $\beta_j$'s cancels. 

When $n=4$, that is $N=2$, these conditions put together imply that huge cancellations arise: remains only terms that depends on $\beta_j$'s. When $n=5$, remains only terms quadratic in $\beta$, linear in $\lambda$ and in $L$. 
When $n=6$, remains a polynomial much more complex: it contains terms with degree 2, 4 in the $\beta_j$, with coefficients with degree 1 or 2 in $\lambda$, 0 or 2 in $a$ and $b$, 0,1, or 2 in $l_0,l_1$.

\bibliographystyle{plain}
%\bibliography{biblio}

\end{document}